\theoremstyle{plain}
\newtheorem{theorem}{Theorem}
\newtheorem{lemma}{Lemma}
\theoremstyle{definition}
\newtheorem{remark}{Remark}
\DeclareMathOperator{\Res}{Res}
\DeclareMathOperator{\red}{red}
\DeclareMathOperator{\Sing}{Sing}
\date{}
\begin{document}

\title[lines]
{Postulation of disjoint unions of lines and a multiple point, II}
\author{E. Ballico}
\address{Dept. of Mathematics\\
 University of Trento\\
38123 Povo (TN), Italy}
\email{ballico@science.unitn.it}
\thanks{The author was partially supported by MIUR and GNSAGA of INdAM (Italy).}
\subjclass[2010]{14N05; 14H99}
\keywords{postulation; Hilbert function; fat point; unions of lines; sundials; reducible conics}

\begin{abstract}
We study the postulation of a general union $X\subset \mathbb {P}^3$ of one m-point $mP$
and $t$ disjoint lines. We prove that it has the expected Hilbert function,
proving a conjecture by E. Carlini, M. V. Catalisano
and A. V. Geramita.
\end{abstract}

\maketitle

\section{Introduction}\label{S1}

A scheme $X\subset \mathbb {P}^r$ is said to have {\it maximal rank} if for all integers $t>0$ the restriction map $H^0(\mathcal {O}_{\mathbb {P}^r}(t)) \to H^0(X,\mathcal {O}_X(t))$
is either injective or surjective, i.e. if either $h^0(\mathcal {I}_X(t)) =0$ or $h^1(\mathcal {I}_X(t)) =0$, i.e. if $X$ imposes the ``~expected~'' number of conditions to the vector space of all homogeneous degree $t$ polynomials in $r+1$ variables. R. Hartshorne and A. Hirschowitz proved that for all integers $t>0$ and $r\ge 3$ a general union $X \subset \mathbb {P}^r$
of $t$ general lines has maximal rank. E. Carlini, M. V. Catalisano and A. V. Geramita considered several cases in which we allow unions of linear spaces with certain multiplicities \cite{ccg2}, \cite{ccg3}, \cite{ccg1}). We recall that for each $P\in \mathbb {P}^r$ the m-point $mP$ of $\mathbb {P}^r$ is the closed subscheme
of $\mathbb {P}^r$ with $(\mathcal {I}_P)^m$ as its ideal sheaf. E. Carlini, M. V. Catalisano and A. V. Geramita  proved that for all $r\ge 4$, $m>0$ and $d>0$ a general union of an m-point and $d$ disjoint lines
has maximal rank (\cite{ccg1}). In the case $r=3$ they proved that there are some exceptional
cases (the one with $2 \le d \le m$ and $t=m$); in \cite{ccg1} the failure of maximal rank for these cases  is exactly described, i.e.
all positive integers $h^0(\mathcal {I}_X(t))$ and $h^1(\mathcal {I}_X(t)) $ are computed (\cite[Theorem 4.2, part (ii)]{ccg1}). They conjectured in \cite{ccg1} that these
are the only exceptional cases and proved the conjecture
in some cases (e.g. if $m=2$ by \cite[Theorem 4.2, part (i)(e)]{ccg1}). In \cite{ab} their conjecture was proved when $m=3$ and an asymptotic result was proved
for arbitrary $m$ (\cite[Propositions 1 and 2]{ab}). In this paper we prove their conjecture in the case $m = 3$, i.e. we prove the following result.

\begin{theorem}\label{i1}
Fix integers $m\ge 2$, $t>0$ and $d >0$. If $2 \le d \le m$, then assume $t \ge m+1$. Let $Y\subset \mathbb {P}^3$ be a general union of $d$ lines.
Then either $h^1(\mathcal {I}_{mP\cup Y}(t)) =0$ or $h^0(\mathcal {I}_{mP\cup Y}(t)) =0$.
\end{theorem}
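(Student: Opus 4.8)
The plan is a double induction, on the multiplicity $m$ and on the degree $t$, carried out by the Horace (Castelnuovo) method with respect to a plane $H\cong\mathbb{P}^2$, using sundials — the flat limits of pairs of skew lines that become coplanar — to absorb the residue of $d$ modulo $t+1$.

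\emph{Step 1: reformulation and reduction to two values of $d$.} For a general $Y$ the lines are disjoint and miss $P$, so $h^0(\mathcal{I}_{mP\cup Y}(t))-h^1(\mathcal{I}_{mP\cup Y}(t))=\chi(\mathcal{I}_{mP\cup Y}(t))=\binom{t+3}{3}-\binom{m+2}{3}-d(t+1)$; hence the conclusion is equivalent to $h^0(\mathcal{I}_{mP\cup Y}(t))\cdot h^1(\mathcal{I}_{mP\cup Y}(t))=0$. Let $d(t)$ be the largest $d$ with $\binom{t+3}{3}-\binom{m+2}{3}-d(t+1)\ge 0$. Deleting one line of $Y$ gives $0\to\mathcal{I}_{mP\cup Y}(t)\to\mathcal{I}_{mP\cup Y'}(t)\to\mathcal{O}_{\mathbb{P}^1}(t)\to 0$ with $h^1(\mathcal{O}_{\mathbb{P}^1}(t))=0$, so $h^1$ is non-increasing and $h^0$ non-decreasing as lines are removed; it therefore suffices to prove $h^1(\mathcal{I}_{mP\cup Y}(t))=0$ when $Y$ has $d(t)$ lines and $h^0(\mathcal{I}_{mP\cup Y}(t))=0$ when $Y$ has $d(t)+1$ lines.

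\emph{Step 2: the degeneration.} For each of the two critical values fixed above, specialize $mP\cup Y$ to a scheme $X_0$ by: putting $P\in H$; letting $a$ lines specialize into $H$ (call them $R_1,\dots,R_a\subset H$); letting $b$ disjoint pairs of lines specialize to $b$ sundials with support in $H$, each contributing a plane conic $C_j$ and an embedded point at its node transverse to $H$; and keeping the remaining $c=d-a-2b$ lines general, each meeting $H$ in one point. Using $\Res_H(mP)=(m-1)P$, $\mathrm{Tr}_H(mP)=mP\subset H$, $\Res_H(R_i)=\emptyset$, $\mathrm{Tr}_H(R_i)=R_i$, $\Res_H(L)=L$ and $\mathrm{Tr}_H(L)=L\cap H$ for a transverse line $L$, and $\Res_H(\text{sundial})$ a reduced point, $\mathrm{Tr}_H(\text{sundial})=C_j$, the Castelnuovo sequence $0\to\mathcal{I}_{W}(t-1)\to\mathcal{I}_{X_0}(t)\to\mathcal{I}_{Z,H}(t)\to 0$ with $W=\Res_H(X_0)=(m-1)P\cup(c\text{ lines})\cup(b\text{ points})$ and $Z=\mathrm{Tr}_H(X_0)=mP\cup R_1\cup\dots\cup R_a\cup C_1\cup\dots\cup C_b\cup(c\text{ points})$ yields $h^i(\mathcal{I}_{X_0}(t))\le h^i(\mathcal{I}_{W}(t-1))+h^i(H,\mathcal{I}_{Z,H}(t))$ for $i=0,1$. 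The residual term is controlled by the inductive hypothesis in degree $t-1$ and multiplicity $m-1$ (after the same reduction as in Step~1, noting that deleting general points does not raise $h^1$ and does not lower $h^0$), and the trace term by a maximal-rank statement in $\mathbb{P}^2$ for one fat point together with general lines and general points, proved by a further Horace reduction to a line (or quoted).

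\emph{Step 3: numerics and conclusion.} Pascal's identities give $\bigl(\binom{t+2}{3}-\binom{m+1}{3}\bigr)+\bigl(\binom{t+2}{2}-\binom{m+1}{2}\bigr)=\binom{t+3}{3}-\binom{m+2}{3}$, so the residual capacity in degree $t-1$ and the trace capacity in degree $t$ add up exactly to the total. Since a transverse line contributes $t$ to the residual count and $1$ to the trace count, a line in $H$ contributes $0$ and (at most) $t+1$, and a sundial contributes $1$ and $2t+1$, one can choose $(a,b,c)$ making $W$ and $Z$ simultaneously at or below (resp. at or above) their thresholds in the $d(t)$ case (resp. the $d(t)+1$ case), the sundials being exactly what absorbs the part of $\binom{t+3}{3}-\binom{m+2}{3}$ that is not a multiple of $t+1$. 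Upper semicontinuity of $h^0$ and $h^1$ then transfers the vanishing from $X_0$ to a general $mP\cup Y$, completing the inductive step. The induction starts at $m=2$, where $(m-1)P$ is a simple point, covered by \cite[Theorem 4.2, part (i)(e)]{ccg1}, and at small $t$; in the range $2\le d\le m$ with $t$ small one invokes the hypothesis $t\ge m+1$ together with \cite[Theorem 4.2, part (ii)]{ccg1} and the results of \cite{ab}, or checks a finite list.

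I expect the main obstacle to be the numerical juggling of Step~3: for every $(m,t,d)$ outside the base range one must exhibit an admissible triple $(a,b,c)$ for which the residual scheme $(m-1)P\cup(c\text{ lines})$ does \emph{not} land in the excluded region ``$2\le c\le m-1$ and $t-1\le m-1$'' and for which the $\mathbb{P}^2$ trace is admissible, and near the boundary where $\chi(\mathcal{I}_{mP\cup Y}(t))$ is small — precisely where the exceptional cases of the Carlini–Catalisano–Geramita conjecture occur — this forces several ad hoc subcases (keeping $P\notin H$, adjusting the number of sundials by one, or placing an extra general point on one of the lines), which is where the bulk of the technical work will lie.
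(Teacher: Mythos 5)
Your plan is the classical Hartshorne--Hirschowitz degeneration (plane plus sundials) adapted by putting $P$ in the specializing plane $H$, with a double induction on $(m,t)$. That is genuinely different from what the paper does: there the main induction is on the degree alone, with step two, using a smooth \emph{quadric} $Q$ with $P\notin Q$, so that the fat point passes untouched to the residual and no planar maximal-rank statement for a fat point ever has to be proved; a plane through $P$ appears only in the boundary assertions $B(m)$ (critical value $m+2$), where just one or two lines at a time are specialized into $H$ together with carefully placed tangent vectors, and the induction there is on $m$ along the diagonal $t=m+2$. The difference matters, because the cases your induction on $t$ must bottom out on are precisely the ones your recipe cannot reach.

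Concretely: for $t=m+2$ and $d=a_{m,m+2}\approx 3m/2+3$ the trace on $H$ already contains $\{mP,H\}$ of degree $\binom{m+1}{2}$, leaving only $\binom{m+4}{2}-\binom{m+1}{2}=3m+6$ conditions in the plane; a single sundial conic costs $2t+1=2m+5$ of these and a line in $H$ costs about $m+2$, so at most one line and essentially no sundial fits, and no choice of $(a,b,c)$ both balances the count and leaves the trace in a range where your (unproved) planar lemma for $\{mP,H\}\cup(\text{lines})\cup(\text{conics})\cup(\text{points})$ could apply. This is exactly why the paper replaces the generic recipe by the hand-built assertions $B(m)$ and $R(m)$, with tangent vectors and prescribed collinearities, and why it imports the case $t=m+1$, $d=m+2$ from \cite[Theorem 4.2, part (i)(c)]{ccg1} as ``a crucial step'' rather than proving it by degeneration. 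Two further gaps: the planar trace statement you invoke is itself a nontrivial maximal-rank problem with exceptional cases and is neither proved nor quotable in the generality you need; and the $b$ residual points of your sundials are nodes lying in $H$, a plane through the support of $(m-1)P$, so they are not general in $\mathbb{P}^3$ and the ``general points impose independent conditions'' step for the residual does not apply as stated. Your Step 3 correctly identifies where the work lies, but that work --- the critical range $t\in\{m+1,m+2\}$ and the compatibility of the two inductions --- is the entire content of the theorem, and as written the proposal does not supply it.
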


A crucial step of the proof is contained in \cite[Theorem 4.2, part (i)(c)]{ccg1}: the proof of the case $d= m+2$ and $t=m+1$. Let $Y\subset \mathbb {P}^3$ be a general union of $m+2$
lines. They proved that $h^i(\mathcal {I}_{mP\cup Y}(m+1))=0$, $i=0,1$. After \cite{hi} and \cite{hh} it is well-known that if certain crucial curves or unions of curves
and points, say $X_1$ and $X_2$, have
$h^i(\mathcal {I}_{X_1}(t_0))=0$, $i=0,1$, and $h^i(\mathcal {I}_{X_2}(t_0+1)) =0$, $i=0,1$, then it should be easy to control the postulation of all curves of degree $\ge \deg (X_2)$ with respect to
all forms of degree $\ge t_0+2$. In our case by \cite[Theorem 4.2, part (i)(c)]{ccg1} we may take $X_1 =mP\cup Y$ with $\deg (Y)=m+2$. The key part of the proof is the construction
of
a good $X_2$ for $t_0+1 =m+2$ and then to control the cases $t=t_0+3$ and $t =t_0+4$.

We work over an algebraically closed field $\mathbb {K}$. As far as we understand none of our quotations of \cite{ccg1} require the characteristic zero assumption
made in \cite{ccg1}.

\section{Preliminaries}\label{Sp}
For any integer $d> 0$ let $L(d)$ be the set of all unions $Y\subset \mathbb {P}^3$ of $d$ disjoint lines. For any $P\in \mathbb {P}^3$ set $L(P,d):= \{Y\in L(d): P\notin Y\}$.
If $P$ is a smooth point of a scheme $T$ let $\{mP,T\}$ be the closed subscheme of $T$ with $(\mathcal {I}_{P,T})^m$ as its ideal sheaf. We write $mP$ instead of $\{mP,\mathbb {P}^3\}$. For any positive-dimensional $A\subseteq \mathbb {P}^3$ and any smooth point $O$ of $A$ a {\it tangent vector} of $A$ with $O$ as its support is a degree $2$ connected zero-dimensional scheme $v\subset A$
such that $\deg (v) =2$ and $v_{\red }=\{O\}$.

Let $F \subset \mathbb {P}^3$ be any surface. Set $t:= \deg (F)$. For each closed subscheme $Z\subset \mathbb {P}^3$ let $\Res_F(Z)$ denote the residual scheme
of $Z$ with respect to $F$, i.e. the closed subscheme of $\mathbb {P}^3$ with $\mathcal {I}_Z:\mathcal {I}_F$ as its ideal sheaf. If $Z$ is reduced, then $\Res_F(Z)$
is the union of the irreducible components of $Z$ not contained in $F$. Now assume $Z = mP$ for some $m>0$ and some $P\in \mathbb {P}^3$. If $P\notin F$,
then $\Res_F(mP) = mP$. If $P$ is a smooth point of $F$, then $\Res_F(mP) = (m-1)P$ (with the convention $0P = \emptyset$). For any integer $x\ge t$ we have an exact sequence
$$0\to  \mathcal {I}_{\Res_F(Z)}(x-t) \to \mathcal {I}_Z(x) \to \mathcal {I}_{Z\cap F,F}(x) \to 0$$Hence
\begin{itemize}
\item $h^0(\mathcal {I}_Z(x)) \le h^0(\mathcal {I}_{\Res_F(Z)}(x-t)) + h^0(F,\mathcal {I}_{Z\cap F}(x))$;
\item $h^1(\mathcal {I}_Z(x)) \le h^1(\mathcal {I}_{\Res_F(Z)}(x-t)) + h^1(F,\mathcal {I}_{Z\cap F}(x))$.
\end{itemize}
As in \cite{ccg2}, \cite[Lemma 3.3]{ccg3} and \cite{ccg1} we will call ``~the Castelnuovo's inequality~'' any of these two inequalities. If $F$ is either
a plane or a smooth quadric, $D$ is an effective divisor of $F$ and $Z\subset F$ is a closed subscheme of $F$, $\Res _D(Z)$ is the closed
subscheme of $F$ with $\mathcal {I}_{Z,F}:\mathcal {I}_{D,F}$ as its ideal sheaf (of course, $\mathcal {I}_{D,F}\cong \mathcal {O}_F(-D)$ as abstract line bundles on $F$).
We also have the corresponding Castelnuovo's exact sequence of $\Res _D$ and the associated Castelnuovo's inequalities.

Set $0P:= \emptyset$. We use the convention that $\binom{t}{3} =0$ if $-2\le t \le 2$ and $\binom{t+2}{2} =0$ if $-1\le t \le 1$. We have $\deg (0P) = 0 = \binom{2}{3}$. For all integers $m\ge 0$ and $k\ge 0$ define the integers $a_{m,k}$ and $b_{m,k}$ by the relations
\begin{equation}\label{eqa1}
\binom{m+1}{3} +(k+1)a_{m,k} +b_{m,k} =\binom{k+3}{3}, 0 \le b_{m,k} \le k
\end{equation}
If $k\ge 2$ from (\ref{eqa1}) for $k$, $k-2$, $k-1$ and $m-1$ we get 
\begin{equation}\label{eqa2}
2a_{m,k-2} +(k+1)(a_{m,k}-a_{m,k-2}) + b_{m,k}-b_{m,k+1} =(k+1)^2
\end{equation}
Taking the difference of (\ref{eqa2}) with $k=m+2$ and the same equation with $(m',k') = (m-1,m+1)$ and using that
$\binom{m+2}{3} -\binom{m+1}{3} = \binom{m+1}{2}$ and $\binom{m+4}{2} -\binom{m+1}{2} = 3m+6$, we get
\begin{equation}\label{eqa3}
a_{m-1,m+1} + (m+3)(a_{m,m+2} -a_{m-1,m+2}) +b_{m,m+2} -b_{m-1,m+1} = 3m+6
\end{equation}
for all $m>0$. Taking $k=m+2$ in (\ref{eqa1}) we get
\begin{equation}\label{eqa4}
(m+3)a_{m,m+2} +b_{m,m+2} = (3m^2+15m+30)/2
\end{equation}

\begin{remark}\label{a1}
We have $b_{m,m+1} =0$ and $a_{m,m+1} = m+2$ for all $m$. From (\ref{eqa4}) we get that if $m$ is even, then $a_{m,m+2} = 3m/2 +3$ and $b_{m,m+2} =1$, while if $m$ is
odd, then $a_{m,m+2} = 3m/2 +5/2$ and $b_{m,m+2} = m/2 + 5/2$. Hence for all $m\ge 3$ we have $ a_{m-1,m+1}  > m$, $a_{m,m+2}=a_{m-1,m+1}+2$
if $m$ is even and $a_{m,m+2} = a_{m-1,m+1} +1$ if $m$ is odd.
We have $\binom{m+6}{3} -\binom{m+2}{3} =2m^2+12m+10$ and hence $a_{m,m+3} =2m+4$ and $b_{m,m+3} =4$ for all $m\ge 1$, $a_{0,3} = 6$, $b_{0,3} =2$.
We have $\binom{m+7}{3} -\binom{m+2}{3} = (5m^2+35m+70)/2$. If $m$ is even and $m\ge 6$, then $a_{m,m+4} = 5m/2+5$ and $b_{m,m+4} =10$. If $m\in \{2,4\}$,
then $a_{m,m+4} =5m/2+6$ and $b_{m,m+4} =5-m$.
If $m$ is odd and $m\ge 17$, then $a_{m,m+4} =5m/2+9/2$ and $b_{m,m+4} = (m+25)/2$. If $m\in \{3,5,7,9,11,13,15\}$, then $a_{m,m+4} = 5m/2+11/2$ and $b_{m,m+4}
= (15-m)/2$. 
\end{remark}

For all positive integers $m, d$ the {\it critical value} of the pair $(m,d)$ is the minimal integer $k\ge m$ such that $\binom{m+2}{3} +(k+1)d\le \binom{k+3}{3}$.
Let $W\subset \mathbb {P}^3$ be a union of $d$ disjoint lines with $P\notin W$. The scheme $mP\cup W$ has maximal rank if and only if $h^0(\mathcal {I}_{mP\cup W}(k-1))=0$
and $h^1(\mathcal {I}_{mP\cup W}(k)) =0$, where $k$ is the critical value of $(m,d)$. Using (\ref{eqa2}) it is easy to check that for a fixed integer $m>0$ the sequence $a_{m,k}$ is strictly increasing for
all $k\ge m-1$ (we have $a_{m,m-1}=0$). The integer $k$ is the critical value of the pair $(m,d)$ if and only if $a_{m,k-1}< d \le a_{m,k}$.

\section{Assertions $B(m)$, $R(m)$ and $H_{m,k}$}\label{S3}

For every odd positive integer $m$ we define Assertion $B(m)$ in the following way.

\quad {\bf Assertion} $B(m)$, $m \ge 1$, $m$ odd: There is a 7-ple $(Y,L,R,S,O, H,v)$ with the following properties:
\begin{enumerate}
\item $H$ is a plane containing $P$,  $L$ and $R$ are lines of $H$, $L\ne R$,  $P\notin L\cup R$, and $\{O\}:= L\cap R$;
\item $Y$ is a union of $a_{m,m+2}$ disjoint lines, $P\notin Y$ and $Y\cap H$ is finite;
\item $S\subset H\cap Y$, $\sharp (S)=b_{m,m+2}-2$;
\item $v$ is a disjoint union of $b_{m,m+2}-2$ tangent vectors of $\mathbb {P}^3$, each of them with a point of $S$ as its support;
\item $\sharp (S\cap L) = \lceil (m+3)/4 \rceil$, $\sharp (S\cap R) = \lfloor (m+3)/4\rfloor$
and $L\cap R \cap S=\emptyset$;
\item $h^1(\mathcal {I}_{mP\cup Y\cup v\cup \{O\}}(m+2)) =0$.
\end{enumerate}

Take $(Y,L,R,S,O,H,v)$ satisfying the second, third and fourth of the conditions of $B(m)$. We have $h^0(\mathcal {O}_{mP\cup Y\cup v\cup \{O\}}(m+2)) =\binom{m+5}{3}-1$
and hence $h^1(\mathcal {I}_{mP\cup Y\cup v\cup \{O\}}(m+2)) =h^0(\mathcal {I}_{mP\cup Y\cup v\cup \{O\}}(m+2))-1$.

For every even integer $m\ge 2$ we define Assertion $B(m)$ in the following way.

\quad {\bf Assertion} $B(m)$, $m \ge 2$, $m$ even: There is a quadruple $(Y,L,R,H)$ with the following properties:
\begin{enumerate}
\item $H$ is a plane containing $P$,  $L$ and $R$ are lines of $H$, $L\ne R$, and $P\notin L\cup R$;
\item $Y$ is a union of $a_{m,m+2}$ disjoint lines, $P\notin Y$ and $Y\cap H$ is finite;
\item $\sharp ((Y\cap H)\cap L) = \lceil (m+2)/4 \rceil$, $\sharp ((Y\cap H)\cap R) = \lfloor (m+2)/4 \rfloor$;
\item $h^1(\mathcal {I}_{mP\cup Y}(m+2)) =0$.
\end{enumerate}

The last condition of $B(m)$, $m$ even, is equivalent to $h^0(\mathcal {I}_{mP\cup Y}(m+2)) =1$.

\begin{lemma}\label{a8}
$B(m)$ is true for all $m \ge 2$.
\end{lemma}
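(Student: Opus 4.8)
The plan is to prove $B(m)$ by induction on $m$, treating the even and odd cases together and using the step from $m-2$ to $m$ (so the base cases will be $m=2$ and $m=3$, or possibly a few more small values, which should follow from \cite{ccg1} and \cite{ab}). The key numerical input is Remark \ref{a1}: since $a_{m,m+2}=a_{m-1,m+1}+2$ when $m$ is even and $a_{m,m+2}=a_{m-1,m+1}+1$ when $m$ is odd, and since $b_{m,m+2}$ is controlled, the configuration witnessing $B(m)$ should be obtainable from one witnessing $B(m-1)$ by adding one or two lines (and adjusting the point/vector data), together with one application of Castelnuovo's inequality with respect to the plane $H$.

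The main construction is as follows. Start from the data $(Y',L,R,H)$ or $(Y',L,R,S',O,H,v')$ produced by $B(m-1)$. Degenerate one extra line $D$ so that it meets $H$ in a prescribed point, forming $Y = Y'\cup D$ (for odd $m$) or $Y=Y'\cup D_1\cup D_2$ (for even $m$); the point $D\cap H$ is chosen to lie on $L$ or on $R$ according to the ceiling/floor bookkeeping in conditions (3) and (5), so that the intersection numbers with $L$ and $R$ come out right. The residual scheme with respect to $H$ is $\Res_H(mP\cup Y\cup\cdots) = (m-1)P \cup Y' \cup (\text{residuals of the vectors})$, which is exactly the scheme governed by $B(m-1)$ in degree $(m-1)+2 = m+1 = (m+2)-1$; so $h^1$ of the residual in degree $m+1$ vanishes by the inductive hypothesis. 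For the trace on $H$, we get a zero-dimensional subscheme of the plane $H$ (a multiple point $mP$ of $H$ plus $Y\cap H$ plus points of the vectors lying on $H$ plus possibly $\{O\}$), and we must show $h^1(H,\mathcal{I}_{Z\cap H,H}(m+2))=0$. By \eqref{eqa2}, \eqref{eqa3}, \eqref{eqa4} the degree count makes this a ``just-right'' situation, and it reduces to a statement about a general union in $\mathbb{P}^2$ of a fat point and a bunch of general points (and tangent vectors) having maximal rank — this is classical (Alexander–Hirschowitz / the theory of fat points in the plane, or can be done by hand since the points are general and the fat point has multiplicity $m \le m+2$), but one must be careful that the points of $Y\cap H$ are not fully general: $\lceil(m+3)/4\rceil$ of them are forced onto the line $L$ and $\lfloor(m+3)/4\rfloor$ onto $R$. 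Handling that collinearity constraint — showing the trace on $H$ still imposes independent conditions despite $\Theta$ points being forced onto $L\cup R$ — is where a further residuation inside $H$ (with respect to the lines $L$ and $R$, using the plane-residual exact sequence recalled in the Preliminaries) will be needed.

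Concretely, inside $H$ I would residuate the trace with respect to $L$: the points on $L$ (together with the part of $mP$ absorbed by $L$, namely a length-$m$ or length-$(m-1)$ scheme, and any vectors tangent to directions in $H$) split off, leaving $\mathcal{O}_L(m+2)$ on $L$ and a residual scheme in $H$ of lower degree, and one checks $h^1$ on $L$ (a $\mathbb{P}^1$ statement, trivial by counting) and then iterates with $R$, arriving finally at a general configuration of points in $\mathbb{P}^2$ with no forced collinearities, to which the classical result applies. The bookkeeping — verifying that each residuation leaves exactly the right number of points, that $0\le$ (residual degree) and the relevant binomial gaps match \eqref{eqa1}–\eqref{eqa4}, and that $O\in L\cap R$ is correctly accounted for — is routine but must be done with care for both parities and with attention to the small exceptional values flagged in Remark \ref{a1} (e.g. $m\in\{2,4\}$, $m\in\{3,5,\dots,15\}$), which may have to be checked separately, perhaps again via \cite{ccg1} or \cite{ab}.

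The main obstacle I anticipate is not the Castelnuovo exact sequence machinery itself, which is routine, but producing a configuration that is simultaneously (i) general enough that both the residual scheme $(m-1)P\cup Y'\cup(\cdots)$ inherits the vanishing from $B(m-1)$, and (ii) special enough — with the prescribed incidences $S\cap L$, $S\cap R$, $O=L\cap R$, and the degenerations forcing $D\cap H$ onto $L$ or $R$ — to satisfy conditions (1)–(5) of $B(m)$ exactly. The tension between the inductive hypothesis (which wants a general $Y'$) and the extra tangent-vector data $v$ supported on $S\subset H$ (which forces $Y'$ to meet $H$ in special positions) is the crux; resolving it will likely require a semicontinuity/specialization argument, choosing the added line(s) $D$ and the vectors $v\setminus v'$ as limits of general configurations so that the $h^1=0$ conditions, being open, persist. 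I expect the odd case to be harder than the even case because of the presence of $v$ and $\{O\}$ in condition (6), and I would do the even case first as a warm-up.
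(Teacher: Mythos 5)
Your overall skeleton (induction $B(m-1)\Rightarrow B(m)$, one or two added lines according to $a_{m,m+2}-a_{m-1,m+1}$, Castelnuovo residuation with respect to $H$, semicontinuity to return to a general configuration) is the same as the paper's, but the inductive step as you describe it has a genuine gap: you add the new line(s) $D$ (or $D_1\cup D_2$) \emph{transversally} to $H$, meeting $H$ in a prescribed point of $L\cup R$, and then claim $\Res_H(mP\cup Y\cup\cdots)=(m-1)P\cup Y'\cup(\cdots)$, ``exactly the scheme governed by $B(m-1)$.'' This cannot work: a line not contained in $H$ survives residuation, so the residual is $(m-1)P\cup Y'\cup D$ (resp. $\cup D_1\cup D_2$), and its degree with respect to forms of degree $m+1$ is $\binom{m+1}{3}+(m+2)(a_{m-1,m+1}+1)=\binom{m+4}{3}-b_{m-1,m+1}+m+2>\binom{m+4}{3}$ (and worse with two lines), so $h^1$ of the residual is forced to be positive and the Castelnuovo inequality gives nothing; in particular $B(m-1)$ does not apply to that residual. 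The paper's proof works precisely because the extra degree is pushed into the \emph{trace}: for $m$ odd the new line $D$ is taken \emph{inside} $H$ (and only afterwards deformed out of $H$, keeping the point $D\cap L$ or $D\cap R$ fixed, by semicontinuity), and for $m$ even the two new lines are degenerated onto the reducible conic $L\cup R\subset H$ together with an embedded tangent vector $w$ at $O=L\cap R$ and the tangent vectors $v$ at the points of $S$ (a sundial-type flat limit of two disjoint lines incident to $Y$), again concluding by semicontinuity. Your closing remark about ``choosing the added lines as limits of general configurations'' points in the right direction but does not identify these specific degenerations, and your explicit residual computation is inconsistent with the configuration you set up. Relatedly, you treat the tangent vectors $v$ and the point $O$ in the odd assertion as a nuisance, whereas they are exactly the residual $\Res_H$ of the sundial data $w\cup v$ in the next step --- this is the reason the odd and even forms of $B(m)$ are shaped differently, and any proof must build them in at the odd stage.

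Two smaller points. In your in-plane residuation of the trace, $L$ does not absorb ``a length-$m$ or length-$(m-1)$ part of $mP$'': condition (1) imposes $P\notin L\cup R$, so $\Res_L(\{mP,H\})=\{mP,H\}$ and the fat point contributes nothing to the trace on $L$. And no appeal to Alexander--Hirschowitz is needed or quite appropriate: the trace on $H$ is one fat point plus simple points and tangent vectors, some constrained to $L\cup R$; the paper handles it by an elementary one-at-a-time specialization using $h^0(H,\mathcal{O}_{\{mP,H\}}(m))=m+1$ and $|\mathcal{I}_{\{mP,H\}\cup\{2P_i,L_i\}}(m+1)|\cong|\mathcal{I}_{\{mP,H\}}(m)|$, which is closer in spirit to your proposed residuation by $L$ and $R$ and is where the correct argument should go. Finally, your opening sentence announces a step from $m-2$ to $m$, while everything afterwards (correctly) uses $B(m-1)$; the base case needed is only $B(2)$, which follows from \cite[Theorem 4.2(i)(e)]{ccg1}.
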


\begin{proof}

We first prove $B(2)$. Let $Y\subset \mathbb {P}^3$ be a general union of $6$ lines (hence $P\notin Y$). By \cite[part (i)(e) of Theorem 4.2]{ccg1} we have $h^1(\mathcal {I}_{2P\cup Y}(4)) =0$.
Let $H\subset \mathbb {P}^3$ be a general plane though $P$. Moving $Y$ we see that we may assume that no $3$ of the points of $(Y\cap H)\cup \{P\}$ are collinear.

Now assume $m\ge 3$
and that $B(m-1)$ is true.

\quad (a) In this step we assume that $m$ is odd. Take $(Y,L,R,S,H)$ satisfying $B(m-1)$. We have $h^1(\mathcal {I}_{(m-1)P\cup Y}(m+1))=1$.
Let $D\subset H$ be a general line. Let $v\subset H$ be a union of tangent vectors of $H$ with $S$ as its support, but no tangent vector
being a tangent vector of $L\cup R$. We first check that $h^1(\mathcal {I}_{mP\cup Y\cup D\cup v\cup \{O\}}(m+2))=0$. Since $\Res _H(mP\cup Y\cup D \cup v\cup \{O\}) =(m-1)P\cup Y$
and $h^1(\mathcal {I}_{(m-1)P\cup Y}(m+1)) =0$, it is sufficient to prove that $h^1(H,\mathcal {I}_{((mP\cup Y)\cap H)\cup D\cup v \cup \{O\}}(m+2)) =0$,
i.e. $h^1(H,\mathcal {I}_{((mP\cup Y)\cap H)\cup v \cup \{O\}}(m+1)) =0$. The scheme $((mP\cup Y)\cap H)\cup v \cup \{O\}$ is a general
union of $\{mP,H\}$, the scheme $v\cup \{O\}$ and $a_{m-1,m+1} -(m+1)/2$ general points of $H$. Hence it has degree $\binom{m+1}{2}
+2(m+1)/2 + 3(m-1)/2 +3 -(m+1)/2 +1 = \binom{m+3}{2}$. We deform $D$ in a flat family of lines outside $H$ (we may do it even fixing either the point of $D\cap L$
or the point of $D\cap R$). For general $v$ it is easy to check that $h^1(H,\mathcal {I}_{\{mP,H\}\cup v\cup \{O\}}(m+1)) =0$ (order the points of $S$ and then add
the corresponding connected component $v_i$ of $v$ following the ordering first with the point $P_i$ of $S$ general in a component of $L\cup R$ and then with
$v_i$ general among the tangent vectors of $H$ with $P_i$ as its support; at each point use that $h^0(H,\mathcal {O}_{\{mP,H\}}(m)) =m+1$ and that if $P_i\in L_i$, then
$|\mathcal {I}_{\{mP,H\}\cup \{2P_i,L_i\}}(m+1)| \cong |\mathcal {I}_{\{mP,H\}}(m)|$). Since $Y\cap H \setminus S$ is general in $H$, we get $h^i(H,\mathcal {I}_{((mP\cup Y)\cap H)\cup v \cup \{O\}}(m+1)) =0$, $i=0,1$. 

\quad (b) In this step we assume that $m$ is even. Take $(Y,L,R,S,O, H,v)$ satisfying $B(m-1)$. Let $w\subset \mathbb {P}^3$ be a general tangent vector with $O$ as its support.
The scheme $Y\cup L\cup R \cup w\cup v$ is a flat limit of a family of disjoint unions of $a_{m,m+2}$ lines (i.e. there are a flat family $\{Y_t\}_{t\in \Gamma }$, $\Gamma$ an integral affine curve, $o\in \Gamma$, $Y_o = Y\cup L\cup R \cup w\cup v$) such that
$Y\subset Y_t$, say $Y_t = Y\cup L_t\cup R_t$ for all $t$ with $\{L_t\}$, and $\{R_t\}$ flat families with $L_o=L$ and $L_o=L$,  and either $L_t\cap L \ne \emptyset$ for all
$t$, $R_t\cap R =\emptyset$ for all $t\ne o$ (case $m\equiv 2 \pmod{4}$) or $R_t\cap R \ne \emptyset$ for all $t$ and $L_t\cap L=\emptyset$ for
all $t\ne o$ (case $m\equiv 0 \pmod{4}$). We may take as the new set $S$ the set $S\cup (L_t\cup R_t)\cap (L\cup R)$ for a general $t\in \Gamma$.
By the semicontinuity theorem for cohomology (\cite[III.12.8]{h}) it is sufficient to prove that $h^1(\mathcal {I}_{mP\cup Y\cup  L\cup R\cup v\cup w}(m+2)) =0$.
Since $(mP\cup Y\cup L\cup R \cup v\cup w)\cap H = \{mP,H\}\cup L\cup R \cup ((Y\cap H)\setminus S)$ and $\Res _H(mP\cup Y\cup  L\cup R\cup v\cup w)
=(m-1)P\cup Y\cup v\cup \{O\}$, it is sufficient to prove that $h^1(H,\mathcal {I}_{\{mP,H\}\cup L\cup R \cup ((Y\cap H)\setminus S)}(m+2)) =0$,
i.e. $h^1((H,\mathcal {I}_{\{mP,H\}\cup ((Y\cap H)\setminus S)}(m)) =0$. This is true, because $(Y\cap H)\setminus S$ is general in $H$
and $\sharp ((Y\cap H)\setminus S) = 3m/2+1 -m/2 =m+1 =h^0(H,\mathcal {I}_{\{mP,H\}}(m)$.
\end{proof}

\begin{remark}\label{a8.0}
Fix $(m,d)$ with critical value $m+1$ and degree $d\ge m+1$. Let $W\subset \mathbb {P}^m$ be a general union of $d$ lines. Since
$a_{m,m+1}=m+2$, we have $m+1 \le d \le m+2$. By \cite[part (i)(c) of Theorem 4.2]{ccg1} the scheme $mP\cup W$ has maximal rank.
\end{remark}

\begin{lemma}\label{a9}
Fix an integer $d\le a_{m,m+2}$ and let $X\subset \mathbb {P}^3$ be a general union of $d$ lines. Then $h^1(\mathcal {I}_{mP\cup X}(m+2)) =0$.
\end{lemma}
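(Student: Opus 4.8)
The plan is to reduce the statement to the single extremal value $d=a_{m,m+2}$ and then to read it off from Assertion $B(m)$ (Lemma~\ref{a8}); the bridge between these two steps is semicontinuity of cohomology together with an elementary monotonicity remark.

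That monotonicity remark runs as follows. Let $Z\subseteq Z'$ be closed subschemes of $\mathbb{P}^3$ whose kernel sheaf $\mathcal{G}:=\mathcal{I}_Z/\mathcal{I}_{Z'}$ satisfies $h^1(\mathcal{G}(m+2))=0$. From the exact sequence
\[0\to \mathcal{G}(m+2)\to \mathcal{O}_{Z'}(m+2)\to \mathcal{O}_Z(m+2)\to 0\]
the restriction $H^0(\mathcal{O}_{Z'}(m+2))\to H^0(\mathcal{O}_Z(m+2))$ is surjective, so $h^1(\mathcal{I}_{Z'}(m+2))=0$ implies $h^1(\mathcal{I}_Z(m+2))=0$. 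I will use this in two shapes. When $Z'=Z\cup\ell$ for a line $\ell$ disjoint from $Z$, we have $\mathcal{G}\cong\mathcal{O}_\ell$, hence $h^1(\mathcal{G}(m+2))=h^1(\mathcal{O}_{\mathbb{P}^1}(m+2))=0$ (here $m\ge 2$). When $Z'=Z\cup E$ for a zero-dimensional scheme $E$ (possibly meeting $Z$), $\mathcal{G}$ is a subsheaf of $\mathcal{O}_E$, so again $h^1(\mathcal{G}(m+2))=0$.

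Granting this, the argument is short. If $d<a_{m,m+2}$, adjoin to $X$ a general union of $a_{m,m+2}-d$ further lines, obtaining a general union $X'$ of $a_{m,m+2}$ disjoint lines with $P\notin X'$ and $X\subseteq X'$; peeling off the added lines one at a time via the first form of the remark reduces the statement for $X$ to the statement for $X'$, so I may assume $d=a_{m,m+2}$. Since $mP\cup Y$ moves in a flat family over the irreducible variety $L(P,a_{m,m+2})$, the function $Y\mapsto h^1(\mathcal{I}_{mP\cup Y}(m+2))$ is upper semicontinuous by \cite[III.12.8]{h}, so it suffices to produce one member of this family with vanishing $h^1$. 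If $m$ is even, Assertion $B(m)$ furnishes a quadruple $(Y,L,R,H)$ with $Y$ a union of $a_{m,m+2}$ disjoint lines, $P\notin Y$, and $h^1(\mathcal{I}_{mP\cup Y}(m+2))=0$ — exactly what is needed. If $m$ is odd, $B(m)$ furnishes $(Y,L,R,S,O,H,v)$ with $Y$ a union of $a_{m,m+2}$ disjoint lines, $P\notin Y$, and $h^1(\mathcal{I}_{mP\cup Y\cup v\cup\{O\}}(m+2))=0$; since $v\cup\{O\}$ is zero-dimensional, the second form of the remark gives $h^1(\mathcal{I}_{mP\cup Y}(m+2))=0$.

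The only delicate point is the \emph{direction} of the monotonicity step: we deduce vanishing for a subscheme from vanishing for a larger scheme, which is legitimate here precisely because the intervening quotient sheaf is supported either on finitely many points or on a disjoint union of lines, twisted by $m+2\ge 0$. I do not anticipate a genuine obstacle — the substance of the lemma has been pushed into the construction underlying Assertion $B(m)$.
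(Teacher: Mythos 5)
Your argument is correct and is essentially the paper's proof: both reduce, via semicontinuity over the irreducible family of unions of disjoint lines, to exhibiting a single configuration coming from Assertion $B(m)$ (Lemma~\ref{a8}), and both discard the superfluous pieces (extra lines, and in the odd case the zero-dimensional scheme $v\cup\{O\}$) by exactly the monotonicity observation you spell out, which the paper leaves implicit when it takes $W$ to be $d$ of the lines of $Y$. The only point to note is that your reliance on $B(m)$, proved only for $m\ge 2$, silently excludes $m=1$, which the paper disposes of separately by citing Hartshorne--Hirschowitz.
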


\begin{proof}
This statement is obvious if $m=1$ by \cite{hh}. Assume $m\ge 2$. It is sufficient to find a disjoint union $W$ of $d$ lines
such that $P\notin W$ and $h^1(\mathcal {I}_{mP\cup W}(m+2))=0$. Take a solution of $B(m)$ and call $Y$ the curve in it. Take as $W$ the union
of $d$ of the lines of $Y$.
\end{proof}

For all odd integer $m\ge 3$ let $R(m)$ denote the following assertion:

\quad {\bf Assertion} $R(m)$, $m$ odd, $r\ge 3$: There exists a quintuple $(Y,S,D,H,v)$ with the following properties:
\begin{enumerate}
\item $Y\subset \mathbb {P}^3$ is a disjoint union of $3m/2+5/2$ lines, $P\notin Y$, $H$ is a plane containing $P$, $D\subset H$
is a smooth conic such that $P\notin D$ and $S:= (Y\cap H)\cap D$ has cardinality $m/2+5/2$;
\item $v \subset \mathbb {P}^3$ is a disjoint union of tangent vectors of $\mathbb {P}^3$ with $v_{\red }=S$; no connected component
of $v$ is contained in $Y$;
\item $h^i(\mathcal {I}_{mP\cup Y\cup v}(m+2)) =0$. 
\end{enumerate}

\begin{lemma}\label{a12.1}
$R(m)$ is true for all odd integers $m\ge 3$. 
\end{lemma}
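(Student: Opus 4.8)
The plan is to deduce $R(m)$ from Assertion $B(m)$ (Lemma~\ref{a8}) by specializing the smooth conic to a reducible one $L\cup R$ and then running the argument backwards via the semicontinuity theorem. Fix a $7$-tuple $(Y_0,L,R,S_0,O,H,v_0)$ satisfying $B(m)$; since the last condition of $B(m)$ is open, we may take it as general as the incidence requirements of $B(m)$ permit. For $m$ odd, $Y_0$ is a union of $a_{m,m+2}=3m/2+5/2$ disjoint lines, i.e. the number of lines prescribed in $R(m)$; the $\sharp S_0=b_{m,m+2}-2$ lines of $Y_0$ meeting $H$ at a point of $S_0$ all meet $H$ on $L\cup R$, and the remaining $a_{m,m+2}-(b_{m,m+2}-2)=m+2\ge 5$ lines of $Y_0$ are unconstrained. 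Since $v_0\cup\{O\}$ is zero-dimensional, $h^1(\mathcal{I}_{mP\cup Y_0\cup v_0\cup\{O\}}(m+2))=0$ forces $h^1(\mathcal{I}_{mP\cup Y_0}(m+2))=0$, hence $h^0(\mathcal{I}_{mP\cup Y_0}(m+2))=b_{m,m+2}$ by the defining relation of $a_{m,m+2}$ and $b_{m,m+2}$.

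I would then move two of the unconstrained lines of $Y_0$ so that they meet $H$ at general points $Q_1\in L$ and $Q_2\in R$ with $\{Q_1,Q_2\}\cap(S_0\cup\{O\})=\emptyset$, obtaining a disjoint union $Y$ of $a_{m,m+2}$ lines. Set $S:=S_0\cup\{Q_1,Q_2\}$, so $\sharp S=b_{m,m+2}=m/2+5/2$ and all $b_{m,m+2}$ lines of $Y$ through $S$ meet $H$ on the reducible conic $L\cup R$. The crucial point is that this codimension-one degeneration does not destroy the cohomology, i.e. that $h^1(\mathcal{I}_{mP\cup Y}(m+2))=0$ still holds; granting this, $h^0(\mathcal{I}_{mP\cup Y}(m+2))=b_{m,m+2}$ as before. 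Now deform $L\cup R$ inside $H$ to a smooth conic $D_\lambda$ with $P\notin D_\lambda$, simultaneously moving the $b_{m,m+2}$ lines of $Y$ through $S$ so that they keep meeting $H$ along $D_\lambda$; this produces a flat family $Y_\lambda$ of unions of $a_{m,m+2}$ disjoint lines with central fibre $Y$, finite sets $S_\lambda:=(Y_\lambda\cap H)\cap D_\lambda$ of cardinality $b_{m,m+2}$, and a family $v_\lambda$ of disjoint unions of tangent vectors of $\mathbb{P}^3$ with $(v_\lambda)_{\red}=S_\lambda$ and no component contained in $Y_\lambda$. A direct count gives $h^0(\mathcal{O}_{mP\cup Y_\lambda\cup v_\lambda}(m+2))=\binom{m+5}{3}$; combined with $h^1(\mathcal{O}_{mP\cup Y_\lambda\cup v_\lambda}(m+2))=0$ and the exact sequence $0\to\mathcal{I}_{mP\cup Y_\lambda\cup v_\lambda}(m+2)\to\mathcal{O}_{\mathbb{P}^3}(m+2)\to\mathcal{O}_{mP\cup Y_\lambda\cup v_\lambda}(m+2)\to0$ this yields $h^0(\mathcal{I}_{mP\cup Y_\lambda\cup v_\lambda}(m+2))=h^1(\mathcal{I}_{mP\cup Y_\lambda\cup v_\lambda}(m+2))$. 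So it is enough to prove $h^0=0$ for general $\lambda$, and by semicontinuity it is enough to do this at the central fibre.

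At the central fibre the scheme is $mP\cup Y\cup\tilde v$ with $\tilde v$ a union of $b_{m,m+2}$ tangent vectors supported on $S=S_0\cup\{Q_1,Q_2\}$; since exhibiting one good $\tilde v$ suffices, take $\tilde v$ restricting to $v_0$ over $S_0$ and to $w_1\cup w_2$ over $\{Q_1,Q_2\}$, so that $mP\cup Y\cup\tilde v=(mP\cup Y\cup v_0)\cup w_1\cup w_2$. As $mP\cup Y\cup v_0\subset mP\cup Y\cup v_0\cup\{O\}$ and the bigger scheme has $h^0(\mathcal{I}(m+2))=1$, removing the single reduced point $O$ (a skyscraper quotient of length $\le 1$) gives $h^0(\mathcal{I}_{mP\cup Y\cup v_0}(m+2))\le 2$; thus $|\mathcal{I}_{mP\cup Y\cup v_0}(m+2)|$ has projective dimension at most $1$, and a base-locus argument — with $Q_1$ general on $L$ and lying on a general line of $Y$, and $w_1$ a general tangent vector there transverse to $Y$ (and to $H$), so that $w_1$ is not contained in the base scheme of the system, and likewise with $Q_2,w_2$ on another line if a pencil survives the addition of $w_1$ — shows that $w_1$ and $w_2$ impose two independent conditions, whence $h^0(\mathcal{I}_{mP\cup Y\cup\tilde v}(m+2))=0$. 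The remaining conditions of $R(m)$ ($P\notin Y$, $P\notin D$, $\sharp S=m/2+5/2$, and the shape of $v$) are built into the construction, so this proves $R(m)$.

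The genuinely delicate step is the one flagged in the second paragraph: that after forcing two further lines of the $B(m)$-datum to meet $H$ on $L\cup R$ one still has $h^1(\mathcal{I}_{mP\cup Y}(m+2))=0$. Since this is a codimension-one specialization inside the parameter space of disjoint unions of $a_{m,m+2}$ lines, the openness of the condition ``$h^1=0$'' does not apply directly; I expect this to be the main work, to be carried out either by exploiting the extra genericity built into the $B(m)$-configuration or, failing that, by re-establishing $h^1(\mathcal{I}_{mP\cup Y}(m+2))=0$ directly through a Castelnuovo restriction to $H$ (or to a quadric through $P$ carrying several of the lines of $Y$), in the spirit of the proof of Lemma~\ref{a8}.
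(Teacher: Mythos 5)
Your argument has a genuine gap, and it sits exactly at the point you yourself flag. Everything in your construction funnels through the claim that, after moving two further lines of the $B(m)$-configuration so that they meet $L$ and $R$ at $Q_1,Q_2$, the cohomological input survives: your central-fibre argument needs $h^0(\mathcal {I}_{mP\cup Y\cup v_0\cup \{O\}}(m+2))=1$ (or at least $h^0(\mathcal {I}_{mP\cup Y\cup v_0}(m+2))\le 2$ together with control of the base scheme at $Q_1,Q_2$) for the \emph{specialized} $Y$, whereas $B(m)$ gives this only for $Y_0$. As you note, ``$h^1=0$'' is open but not closed, so no semicontinuity argument applies to this specialization; and it is not enough to recover $h^1(\mathcal {I}_{mP\cup Y}(m+2))=0$ alone, since you also need the tangent vectors $v_0$ to keep imposing independent conditions and the base scheme of the residual pencil not to contain general tangent vectors at $Q_1,Q_2$ (points which now lie on $Y\cap L$ and $Y\cap R$, so they do lie in the base locus; only the directions save you, and that requires an argument). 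In short, the one step you defer (``I expect this to be the main work'') is the entire content of the lemma in your approach, so the proposal as written does not prove $R(m)$.

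For comparison, the paper avoids this specialization altogether by stepping down one level: it starts from $B(m-1)$ (the even assertion, a quadruple $(Y,L,R,H)$ with $3m/2+3/2$ lines and the point $(m-1)P$), uses $h^0(\mathcal {I}_{(m-1)P\cup Y}(m+1))=1$ to pick a point $o\in L\cup R$ outside the base locus, so that $h^i(\mathcal {I}_{(m-1)P\cup Y\cup \{o\}}(m+1))=0$ for $i=0,1$, then deforms $L\cup R$ to a smooth conic $C\subset H$, adds \emph{one} extra line $T$ (spanned by two general points $o',o''$ of $C$, bringing the number of lines up to $a_{m,m+2}=3m/2+5/2$) and suitable tangent vectors $w\cup v'\cup v''$, and verifies $h^i(\mathcal {I}_{mP\cup Y'\cup v}(m+2))=0$ by Castelnuovo residual exact sequences, first with respect to $H$ (where the residual is exactly the scheme $(m-1)P\cup Y\cup\{o'\}$ already known to have the right cohomology) and then with respect to $T$ inside $H$. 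If you want to salvage your route, you would have to supply a proof of the specialized vanishing of comparable difficulty; the natural way to do so is precisely the restriction-to-$H$ argument the paper runs from $B(m-1)$, at which point you have reproduced the paper's proof rather than shortcut it.
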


\begin{proof}
Take  $(Y,L,R,H)$ satisfying $B(m-1)$. We have $\sharp (Y\cap (L\cup R)) = (m+1)/2$, $h^1(\mathcal {I}_{(m-1)P\cup Y}(m+1)) =0$
and $h^0(\mathcal {I}_{(m-1)P\cup Y}(m+1)) =1$. Since $h^0(\mathcal {I}_{(m-2)P\cup Y}(m)) =0$, $P\in H$, and $Y\cap (R\cup L) \ne Y\cap H$,
there is $o\in L\cup R$ not in the base locus of $|\mathcal {I}_{(m-1)P\cup Y}(m+1)|$ and
hence $h^i(\mathcal {I}_{(m-1)P\cup Y\cup \{o\}}(m+1)) =0$, $i=0,1$. We may deform $(Y,L\cup R,o)$ to $(Y',C,o')$, where $C\subset H$ is a smooth conic, $P\notin C$, $o'\in C\setminus C\cap Y$,
$\sharp (Y'\cap C) = (m+1)/2$ and $h^0(\mathcal {I}_{(m-1)P\cup Y\cup \{o'\}}(m+1))=0$. We may take as $o'$ a general point of $C$. Let $o''$ be another general point
of $C$ and call $T$ the line spanned by $o'$ and $o''$ (alternatively, take a general line $T\subset H$ and set $\{o',o''\}:= C\cap T)$.
Let $w\subset H$ be a general union of tangent vectors of $H$, each of them supported by a different point of $Y\cap (L\cup R)$. Let $v'\subset \mathbb {P}^3$ be a general tangent vector
of $\mathbb {P}^3$ with $o'$ as its support (hence $\Res _H(v') =\{o'\}$). Let $v''\subset H$ be a general tangent vector of $H$ with $v''_{\red }= \{o''\}$.
Since $v'' \subset H$, we have $\Res _H(v'') =\emptyset$. Since $v''$ is general, it is not tangent to $T$ and hence $\Res _T(v'') =\{o''\}$.
Take $Y':= Y\cup T$, $v:= w\cup v'\cup v''$ and $S:= Y\cap (L\cup R) \cup \{o',o''\}$. We want to check that the quintuple $(Y',S,C,H,v)$ satisfies $R(m)$.
The scheme $v$ is a union of tangent vectors, one for each point of $S$. We have $\sharp (S) =(m+1)/2 +2 = (m+5)/2$. The set $S$ is contained in the smooth
conic $D$. It is sufficient to check that $h^i(\mathcal {I}_{mP\cup Y'\cup v}(m+2))=0$, $i=0,1$. Since $\Res _H(mP\cup Y'\cup v) = (m-1)P\cup Y\cup \{o'\}$,
$h^i(\mathcal {I}_{(m-1)P\cup Y\cup \{o\}}(m+1)) =0$, $i=0,1$, $o'\in D$, $(mP\cup Y'\cup v)\cap H = \{(m-1)P\cup w \cup T\cup
\{o'\}\cup v'' \cup ((Y\cap H)\setminus (Y\cap H)\cap (L\cup R))$ and $\Res _T(\{mP,H\}\cup T \cup v'' \cup (Y\cap H)\cup w) = \{mP,H\} \cup (Y\cap H)\cap w \cup \{o''\}$,
it is sufficient to prove that $h^i(H,\mathcal {I}_{\{mP,H\}\cup (Y\cap H) \cup w\cup \{o''\}}(m+1))=0$. We have $\deg (\{mP,H\}\cup (Y\cap H) \cup w\cup \{o''\})
= \binom{m+1}{2} +3m/2+3/2 +(m+1)/2 + 1 = \binom{m+3}{2}$. Use again that $h^1(H,\mathcal {I}_{\{mP,H\}\cup w}(m+1)) =0$ (as
in part (a) of the proof of Lemma \ref{a8}) and that $Y\cap H \setminus v_{\red}$ is general in $H$.
\end{proof}

\begin{lemma}\label{a10}
Fix an integer $d> a_{m,m+2}$ and let $X\subset \mathbb {P}^3$ be a general union of $d$ lines. Then $h^0(\mathcal {I}_{mP\cup X}(m+2)) =0$.
\end{lemma}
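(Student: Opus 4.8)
The goal is the "other side" of Lemma \ref{a9}: once the degree $d$ exceeds the threshold $a_{m,m+2}$ determined by the numerology in \eqref{eqa1}, a general union of $d$ lines together with $mP$ is too big to lie on a degree $m+2$ surface, i.e. $h^0(\mathcal{I}_{mP\cup X}(m+2))=0$. By semicontinuity it suffices to exhibit one disjoint union $W$ of $d$ lines with $P\notin W$ and $h^0(\mathcal{I}_{mP\cup W}(m+2))=0$; moreover, since adding more lines can only decrease $h^0$, it is enough to treat the boundary case $d=a_{m,m+2}+1$. So the plan is: take a good configuration realizing Assertion $B(m)$ (whose $Y$ already has exactly $a_{m,m+2}$ lines and satisfies $h^0(\mathcal{I}_{mP\cup Y}(m+2))\le 1$ — indeed $=1$ in the even case, and $h^1(\mathcal{I}_{mP\cup Y\cup v\cup\{O\}}(m+2))=0$ with degree $\binom{m+5}{3}-1$ in the odd case), and add one carefully chosen extra line to force the remaining section to vanish.

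In the even case, $B(m)$ gives $(Y,L,R,H)$ with $h^0(\mathcal{I}_{mP\cup Y}(m+2))=1$; let $F$ be the unique surface of degree $m+2$ through $mP\cup Y$. Choose a line $N$ disjoint from $Y$, with $P\notin N$, and not contained in $F$ (a general line works, since $\deg F=m+2<\infty$). Then any section of $\mathcal{I}_{mP\cup Y\cup N}(m+2)$ is a multiple of the equation of $F$, hence vanishes on $N\not\subset F$, so it is zero; thus $h^0(\mathcal{I}_{mP\cup(Y\cup N)}(m+2))=0$ with $Y\cup N$ a disjoint union of $a_{m,m+2}+1$ lines avoiding $P$, as desired. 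In the odd case one argues through the residual sequence on $H$ instead: from $B(m)$ we have $h^1(\mathcal{I}_{mP\cup Y\cup v\cup\{O\}}(m+2))=0$ and $h^0(\mathcal{I}_{mP\cup Y\cup v\cup\{O\}}(m+2))=1$ by the degree count recorded after the statement of $B(m)$; let $F$ be the unique such surface. If $H\subseteq F$ we would get $h^0(H,\mathcal{I}_{(mP\cup Y\cup v\cup\{O\})\cap H}(m+2))$ too large relative to $h^0(\mathcal{I}_{(m-1)P\cup Y}(m+1))$, contradicting Lemma \ref{a8}/the bookkeeping in its proof; so $H\not\subseteq F$, and again a general extra line $N$ (disjoint from $Y$, missing $P$, not in $F$) kills the last section: $h^0(\mathcal{I}_{mP\cup Y\cup N}(m+2))=0$, and $Y\cup N$ has $a_{m,m+2}+1$ disjoint lines. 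This handles $d=a_{m,m+2}+1$, and adding further disjoint general lines handles all larger $d$.

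The only point needing care — and the step I expect to be the main obstacle — is the case $m=2$, where $a_{2,4}=6$ and $B(2)$ was verified directly via \cite[Theorem 4.2(i)(e)]{ccg1}: one must check that the unique quartic through $2P\cup Y$ (for $Y$ a general union of $6$ lines) does not contain a chosen general line, which is immediate since a general line is not contained in any fixed surface. More delicate is making sure that when we invoke "$h^0=1$" we are on the correct side of the critical value, i.e. that the expected dimension is indeed exactly $1$; this is exactly what \eqref{eqa4} and Remark \ref{a1} encode ($(m+3)a_{m,m+2}+b_{m,m+2}=(3m^2+15m+30)/2=\binom{m+5}{3}-\binom{m+1}{3}$, so $mP\cup Y$ with $\deg Y=a_{m,m+2}$ imposes one condition fewer than $h^0(\mathcal{O}_{\mathbb{P}^3}(m+2))$), and once this arithmetic is in hand the geometric step of adding one generic line is routine.
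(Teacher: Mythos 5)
Your reduction to $d=a_{m,m+2}+1$ and your even-$m$ argument are fine and coincide with the paper's: for $m$ even, $B(m)$ gives $h^0(\mathcal I_{mP\cup Y}(m+2))=1$, and adding one general line not contained in (equivalently, through a point off) the unique degree-$(m+2)$ surface kills the section.

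The odd case, however, has a genuine gap. For $m$ odd the scheme $mP\cup Y$ from $B(m)$ (with $Y$ of degree $a_{m,m+2}=3m/2+5/2$) satisfies $h^0(\mathcal I_{mP\cup Y}(m+2))=b_{m,m+2}=(m+5)/2\ge 4$, not $1$: the ``unique surface $F$'' you invoke is the unique one through the \emph{larger} scheme $mP\cup Y\cup v\cup\{O\}$, but $v\cup\{O\}$ cannot be kept when you pass to $mP\cup Y\cup N$ (it is not part of a flat limit of $mP$ plus $a_{m,m+2}+1$ disjoint lines, so semicontinuity does not apply to it), and once it is dropped there are $(m+5)/2$ independent sections, of which ``$N\not\subset F$'' controls only those proportional to the equation of $F$. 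To conclude $h^0(\mathcal I_{mP\cup Y\cup N}(m+2))=0$ you would need the general line $N$ to impose $(m+5)/2$ independent conditions on $|\mathcal I_{mP\cup Y}(m+2)|$, i.e.\ that no member of this positive-dimensional system contains a general line; this is not automatic (the members all contain the lines of $Y$ and could a priori have ruled or planar components) and is nowhere argued. The paper avoids this entirely: for $m$ odd it starts from a general $W$ with $a_{m-1,m+1}=a_{m,m+2}-1$ lines, where $B(m-1)$ gives $h^0(\mathcal I_{(m-1)P\cup W}(m+1))=1$, kills that section with a general point $o$, and then degenerates the two extra lines to a sundial $L'\cup R'\cup 2o$ inside a plane $M$ containing $P$ and $o$; the Castelnuovo sequence with respect to $M$ reduces the required vanishing $h^0(\mathcal I_{mP\cup W\cup L'\cup R'\cup 2o}(m+2))=0$ to $h^0(\mathcal I_{(m-1)P\cup W\cup\{o\}}(m+1))=0$ plus the planar vanishing $h^0(M,\mathcal I_{\{mP,M\}\cup (W\cap M)}(m))=0$, the latter because $W\cap M$ is $a_{m-1,m+1}>m$ general points of $M$. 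Your odd-case step needs to be replaced by an argument of this kind (or by an actual proof that a general line imposes independent conditions on $|\mathcal I_{mP\cup Y}(m+2)|$).
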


\begin{proof}
It is sufficient to prove the lemma when $d =a_{m,m+2}+1$.
First assume that $m$ is even. Take a solution $(Y,L,R,H)$ of $B(m)$. Since $h^0(\mathcal {I}_{mP\cup Y}(m+2)) =1$, we
have $h^0(\mathcal {I}_{mP\cup Y\cup D}(m+2)) =0$ for any line $D$ through a general point of $\mathbb {P}^3$.

Now assume that $m$ is odd. Let $W\subset \mathbb {P}^3$ be a general union of $a_{m-1,m+1}$ lines. Since $B(m-1)$ is true,
we have
$h^1(\mathcal {I}_{(m-1)P\cup W}(m+1)) =0$ and hence $h^0(\mathcal {I}_{(m-1)P\cup W \cup o}(m+1)) =0$ for a general $o\in \mathbb {P}^3$.
Let $M\subset \mathbb {P}^3$ be a general plane containing $\{P,o\}$. Let $L',R'\subset M$ be two general lines through $o$.
It is sufficient to prove that $h^0(\mathcal {I}_{mP\cup W\cup L'\cup R'\cup 2o}(m+2)) =0$.
Since $\Res _M(mP\cup W\cup L'\cup R'\cup 2o) =(m-1)P\cup W\cup \{o\}$, it is sufficient to prove that
$h^0(H,\mathcal {I}_{\{mP,H\}\cup (W\cap H) \cup L'\cup R'}(m+2)) =0$, i.e. $h^0(H,\mathcal {I}_{\{mP,H\}\cup (W\cap H)}(m)) =0$. Since $W\cap H$
is a general union of $a_{m-1,m+1} >m$ points of $H$, we have $h^0(H,\mathcal {I}_{\{mP,H\}\cup (W\cap H)}(m)) =0$. 
\end{proof}

Consider the following statement:

\quad {\bf Assertion} $H_{m,k}$, $m>0$, $k\ge m+2$: There exist a quintuple $(Y, Q, S,v,E)$ with the following properties:
\begin{enumerate}
\item $Y \in L(P,a_{m,k})$, $Q$ is a smooth quadric surface intersecting transversally $Y$, $P\notin Q$;
\item $S\subseteq Y\cap Q$, $\sharp (S) =b_{m,k}$ and $v\subset \mathbb {P}^3$ is a disjoint
union of tangent vectors with $v_{\red } =S$ and no connected component of $v$ contained in $Y$;
\item $E\subset Q$ is a disjoint union of $\lceil b_{m,k}/2\rceil$ lines, $S\subset E$ and each component of $E$ contains at
most two points;
\item $h^i(\mathcal {I}_{mP\cup Y\cup v}(k)) =0$, $i=0,1$.
\end{enumerate}

Take $(Y,Q,S,v,E)$ satisfying the first two conditions of the definition of $H_{m,k}$. We have $h^0(\mathcal {O}_{mP\cup Y\cup v}(k)) =\binom{k+3}{3}$
and hence $h^0(\mathcal {I}_{mP\cup Y\cup v}(k)) =h^1(\mathcal {I}_{mP\cup Y\cup v}(k))$. Now assume that $(Y,Q,S,v,E)$ satisfies the third condition
of the definition of $H_{m,k}$. If $b_{m,k}$ is even, then each line of $S$ contains exactly two points of $S$. If $b_{m,k}$ is odd, then $\sharp (S\cap L) =2$
for $(b_{m,k}-1)/2$ of the components of $E$, while $\sharp (S\cap L)=1$ for the other component.

From now on $Q\subset \mathbb {P}^3$ is a smooth quadric surface such that $P\notin Q$.

\begin{lemma}\label{a11}
$H_{m,m+3}$ is true for all $m >0$.
\end{lemma}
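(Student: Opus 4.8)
The plan is to build the required quintuple $(Y,Q,S,v,E)$ by induction on $m$, using the residuation/Castelnuovo technique with respect to the fixed smooth quadric $Q$, exactly as in the proofs of Lemmas \ref{a8} and \ref{a12.1}. Recall from Remark \ref{a1} that $a_{m,m+3}=2m+4$ and $b_{m,m+3}=4$ (with $a_{0,3}=6$, $b_{0,3}=2$); so I must produce a union $Y$ of $2m+4$ disjoint lines with $P\notin Y$, transverse to $Q$, a set $S\subseteq Y\cap Q$ of exactly $4$ points lying on two disjoint lines $E=E_1\cup E_2$ of $Q$ (two points on each $E_i$, one from each ruling so that the $E_i$ are disjoint), a union $v$ of $4$ tangent vectors supported on $S$ with no component inside $Y$, and $h^i(\mathcal{I}_{mP\cup Y\cup v}(m+3))=0$ for $i=0,1$. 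Since $\deg(mP\cup Y\cup v)=\binom{m+2}{3}+(m+4)(2m+4)+8=\binom{m+6}{3}$ (this is just \eqref{eqa1} for $k=m+3$), the $h^0$ and $h^1$ vanishings are equivalent, so it suffices to prove $h^1=0$.

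The inductive step goes via $\Res_Q$. I would take a solution of $H_{m-1,m+2}$ (once the base of the induction, which reduces to $H_{1,4}$ and ultimately to \cite{hh}, or to Lemma \ref{a9}/Remark \ref{a8.0} for small $m$, is in place), giving a curve $Y_0\in L(P,a_{m-1,m+2})$ with $h^i(\mathcal{I}_{(m-1)P\cup Y_0\cup v_0}(m+2))=0$. Then I add to $Y_0$ a suitable collection of $a_{m,m+3}-a_{m-1,m+2}=2m+4-(2m+2)=2$ new lines, say $E_1\cup E_2$ chosen inside $Q$, one from each ruling so they are disjoint and each meets the old configuration in the prescribed way, together with tangent vectors to arrange $S$ and $v$. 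Because $\Res_Q(mP\cup Y\cup v)=(m-1)P\cup Y_0\cup v_0\cup(\text{residues of the new vectors})$ and the degree of $Q$ is $2$, the Castelnuovo sequence for $Q$ reduces the problem to a cohomology computation on $Q$ for the sheaf $\mathcal{I}_{(mP\cup Y\cup v)\cap Q,Q}(m+3)$, i.e. to a statement about a union of a few lines of $Q$, the trace $\{mP,\ldots\}\cap Q$ (empty since $P\notin Q$), the points $Y\cap Q$, and the vectors tangent to $Q$, inside $|\mathcal{O}_Q(m+3,m+3)|$. On a smooth quadric $\mathcal{O}_Q(a,b)\cong\mathcal{O}_{\mathbb{P}^1\times\mathbb{P}^1}(a,b)$, so this is a bihomogeneous interpolation problem that should be handled by the usual degeneration-on-$Q$ / Horace-on-$Q$ argument (generic points and generic tangent vectors on $\mathbb{P}^1\times\mathbb{P}^1$ impose independent conditions on bidegree $(m+3,m+3)$ for the relevant count), using that the non-$S$ part of $Y\cap Q$ is general on $Q$.

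The main obstacle I anticipate is bookkeeping rather than a deep idea: I must choose the two new lines $E_1,E_2\subset Q$ and distribute the $4$ points of $S$ and the $4$ tangent vectors so that simultaneously (a) the residual scheme on $Q$ is exactly a \emph{general} such configuration (so the $\mathbb{P}^1\times\mathbb{P}^1$ computation applies cleanly), and (b) the trace on $Q$ — a union of $E_1,E_2$, the $2m+4$ transverse points $Y\cap Q$, and tangent-to-$Q$ directions at the four points of $S$ — has $h^1=0$ in bidegree $(m+3,m+3)$; matching these two requires the degree count $\binom{m+6}{3}=\binom{m+3}{3}+(m+4)(2m+4)+8$ to split correctly between the two sides of the exact sequence, and I expect to need a short separate lemma (or an appeal to the already-established $H$-assertions and to semicontinuity, as in Lemma \ref{a8}(b)) to verify the $Q$-side vanishing when $b_{m,m+3}/2=2$ lines of $Q$ are present. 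The small values of $m$ (in particular $m=1,2$, where $\binom{m+2}{3}$ and the conventions on binomials kick in) will have to be checked by hand, most likely by invoking \cite{hh} for $m=1$ and Lemma \ref{a9} together with an explicit quadric for $m=2$.
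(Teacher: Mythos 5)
Your overall framing (Horace with respect to the fixed quadric $Q$, reduce to a bidegree interpolation problem on $Q\cong\mathbb{P}^1\times\mathbb{P}^1$) is the right general spirit, but the core inductive step you propose does not work. You want to deduce $H_{m,m+3}$ from $H_{m-1,m+2}$ by writing $\Res_Q(mP\cup Y\cup v)=(m-1)P\cup Y_0\cup v_0\cup\cdots$. This is impossible for two reasons. First, in the assertions $H_{m,k}$ one has $P\notin Q$ (you even note the trace of $mP$ on $Q$ is empty), hence $\Res_Q(mP)=mP$: residuation by $Q$ never lowers the multiplicity of the fat point, so no induction on $m$ can be run through $Q$. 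Second, $Q$ has degree $2$, so the residual exact sequence takes degree $m+3$ forms to degree $m+1$ forms, not $m+2$; thus $H_{m-1,m+2}$ is not the residual problem in any case. The arithmetic also breaks: if you add only $2$ new lines inside $Q$ to a configuration $Y_0$ of $a_{m-1,m+2}=2m+2$ lines transverse to $Q$, the residual at level $m+1$ contains $mP\cup Y_0$, and since $2m+2>a_{m,m+1}=m+2$ this scheme has $h^1\neq 0$ for degree reasons, so the Castelnuovo inequality gives nothing.

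The paper's proof keeps $m$ fixed and splits the $a_{m,m+3}=2m+4$ lines differently: take $Y$ a general union of $a_{m,m+1}=m+2$ lines, for which $h^i(\mathcal{I}_{mP\cup Y}(m+1))=0$, $i=0,1$, is already known from \cite[Theorem 4.2, part (i)(c)]{ccg1} (this, not $H_{m-1,m+2}$, is the correct ``residual'' input, and it is exactly why the drop $m+3\to m+1$ by the quadric is the right move); then put the remaining $m+2$ lines $F$ in one ruling of $Q$, choose $S=S_1\cup S_2$ with $S_1\subset Y\cap Q$ of cardinality $2$ and $S_2$ on the two lines of the other ruling through $S_1$, and residuate first by $Q$ and then by $F$ inside $Q$, reducing to an easy $h^1$ vanishing in bidegree $(m+3,1)$. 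One further slip in your setup: the two lines of $E$ carrying the four points of $S$ must lie in the \emph{same} ruling of $Q$ to be disjoint; a line of type $(1,0)$ and one of type $(0,1)$ always meet. So as written the proposal has a genuine gap in the main mechanism, not just in bookkeeping.
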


\begin{proof}
We have $a_{m,m+1}=m+2$, $b_{m,m+1}=0$, $a_{m,m+3} =2m+4$ and $b_{m,m+3} =4$ (Remark \ref{a1}). Let $Y\subset \mathbb {P}^3$ be a general union of $m+2$ lines. By \cite[Part (i)(c) of Theorem 4.2]{ccg1} we have $h^i(\mathcal {I}_Y(m+1)) =0$, $i=0,1$.
For a general $Y$ we may assume that $Y\cap Q$ is formed by $2m+4$ general points of $Q$. 
Let $F\subset Q$ be a general union of $m+2$ lines of type $(0,1)$. Fix $S_1\subset Y\cap Q$ such that $\sharp (S_1) =2$. Let
$E'\subset Q$ be the union of the lines of type $(1,0)$ containing a point of $S_1$. Fix $S_2\subset E'\cap F$ such that $\sharp (S_2\cap L)=1$ for
each component $L$ of $E'$ and that no component of $F$ contains two points of $S_2$. Set $S:=S_1\cup S_2$
and call $v\subset Q$ a general union of tangent vectors of $Q$ with $S$ as its support. We claim that
$h^i(\mathcal {I}_{mP\cup Y\cup F\cup v}(m+3)) =0$. Since $\Res _Q(mP\cup Y\cup F\cup v) =mP\cup Y$, $Q\cap (mP\cup Y\cup F\cup v)
= F\cup v \cup ((Y\cap Q)\setminus S_1)$, $\Res _F(F\cup v \cup ((Y\cap Q)\setminus S_1) = ((Y\cap Q)\setminus S_1)\cup v_{\red} = Y\cap Q  \cup S_2$
and $h^i(\mathcal {I}_{mP\cup Y}(m+1)))=0$, $i=0,1$, to prove the claim it is sufficient
to prove that $h^i(Q,\mathcal {I}_{((Y\cap Q)\setminus S_1)\cup v}(m+3,1)) =0$, $i=0,1$. We have $\deg (((Y\cap Q)\setminus S_1)\cup S)
2m+6$. Hence it is sufficient to use that $h^1(Q,\mathcal {I}_{S}(m+3,1)) =0$ ($S$ is the union of two degree 2 schemes on two different lines of type $(1,0)$)
and $(Y\cap Q)\setminus S_1$ is general in $Q$.
\end{proof}

\begin{lemma}\label{a12}
$H_{m,m+4}$ is true for all $m \ge 2$.
\end{lemma}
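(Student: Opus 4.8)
The plan is to build the required quintuple $(Y,Q,S,v,E)$ for degree $k=m+4$ by degenerating a general union of $a_{m,m+4}$ lines so that a controlled number of them specialize onto the quadric $Q$ as lines of type $(0,1)$, exactly as in the proof of Lemma \ref{a11}, but now starting from the already-established statement $H_{m,m+3}$ rather than from \cite{ccg1}. Concretely: first I would record from Remark \ref{a1} the relevant arithmetic, namely $a_{m,m+3}=2m+4$, $b_{m,m+3}=4$, and the values of $a_{m,m+4}$ and $b_{m,m+4}$ in the various residue classes of $m$ mod $2$ (with the small exceptional cases $m\in\{2,4\}$ and $m\in\{3,5,7,9,11,13,15\}$ to be handled separately or at least checked). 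Set $\delta:=a_{m,m+4}-a_{m,m+3}$, which by the Remark is roughly $m/2$; this $\delta$ is the number of lines of type $(0,1)$ I will add on $Q$.

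Next I would take a solution $(Y_0,Q,S_0,v_0,E_0)$ of $H_{m,m+3}$, so that $Y_0\in L(P,2m+4)$ with $h^i(\mathcal I_{mP\cup Y_0\cup v_0}(m+3))=0$ for $i=0,1$, and $S_0\subset Y_0\cap Q$ has cardinality $4$, lying on the two type-$(1,0)$ lines of $E_0$. Form $Y:=Y_0\cup F$ where $F\subset Q$ is a general union of $\delta$ lines of type $(0,1)$; since each such line meets each type-$(1,0)$ line in one point, $Y$ is a flat limit of a family of disjoint unions of $a_{m,m+4}$ lines (the $F$-components can be pulled off $Q$ into general position, exactly the semicontinuity argument used in Lemmas \ref{a8} and \ref{a11}). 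By \cite[III.12.8]{h} it then suffices to prove $h^i(\mathcal I_{mP\cup Y_0\cup F\cup v}(m+4))=0$ for a suitable $v$, where $v$ is a union of tangent vectors supported on a set $S$ of cardinality $b_{m,m+4}$ chosen inside $Y\cap Q$ and arranged on $\lceil b_{m,k}/2\rceil$ lines of $Q$ so as to satisfy conditions (2) and (3). I would use the Castelnuovo exact sequence for $Q$: since $\Res_Q(mP\cup Y_0\cup F\cup v)=mP\cup Y_0\cup v_0'$ (where $v_0'$ is the residual of whatever part of $v$ is not contained in $Q$) and $h^i(\mathcal I_{mP\cup Y_0\cup v_0}(m+3))=0$, the problem reduces to a cohomology computation purely on $Q$ for the sheaf $\mathcal O_Q(m+4,1)$ restricted to $(Y\cap Q)$ minus the specialization locus, together with $F$, $v$, and the residual points; here one peels off $F$ (a divisor of type $(0,\delta)$, or its components one at a time) via further Castelnuovo sequences on $Q$, reducing to sheaves $\mathcal O_Q(m+4,1-j)$ and ultimately to $h^1(Q,\mathcal I_{S}(m+4,1))=0$ for $S$ a union of degree-$\le 2$ schemes on distinct type-$(1,0)$ lines, plus the genericity of the remaining points of $Y\cap Q$. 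The degree bookkeeping—checking that after all the residuations the scheme on $Q$ has exactly the expected degree $\binom{m+6}{2}$ or less at the relevant twist, which is where equations \eqref{eqa1}–\eqref{eqa4} and the Remark \ref{a1} values of $a_{m,m+4},b_{m,m+4}$ enter—is the routine but essential heart of the argument.

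The main obstacle I anticipate is twofold. First, the several exceptional small values ($m\in\{2,4\}$ and $m\in\{3,5,7,9,11,13,15\}$) have different formulas for $a_{m,m+4}$ and $b_{m,m+4}$, so $\delta$ and the size of $S$ differ there; these cases must be checked individually, and it is possible that for some of them the generic $(0,1)$-degeneration overshoots or undershoots and one needs instead to specialize a line or two into the plane $H$ through $P$ (as in Lemma \ref{a10}) or to use a mixed degeneration. Second, and more delicate, is arranging $S$ on the lines of $Q$ so that simultaneously (i) $\sharp S=b_{m,m+4}$, (ii) the tangent vectors $v$ have no component inside $Y$, (iii) $S$ lies on $\lceil b_{m,m+4}/2\rceil$ lines of $Q$ each containing at most two points, and (iv) the residual scheme after removing $F$ still has $h^1=0$ at twist $(m+4,1)$; since $b_{m,m+4}$ can be as large as $(m+25)/2$, one needs enough type-$(1,0)$ lines of $Q$ carrying points of $(Y_0\cup F)\cap Q$, and verifying that the generic configuration supplies them (choosing $S_1\subset Y_0\cap Q$ and $S_2\subset F\cap(\text{type-}(1,0)\text{ lines})$ as in Lemma \ref{a11}) is where care is required.
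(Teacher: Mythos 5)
There is a genuine gap, and it is at the very first reduction. You propose to start from $H_{m,m+3}$ and residuate with respect to the quadric $Q$ at degree $m+4$; but $Q$ has degree $2$, so the Castelnuovo sequence relates $\mathcal{I}_{mP\cup Y_0\cup F\cup v}(m+4)$ to the residual scheme twisted by $m+2$, not $m+3$. The vanishing $h^i(\mathcal{I}_{mP\cup Y_0\cup v_0}(m+3))=0$ supplied by $H_{m,m+3}$ is therefore of no use, and it cannot be repaired: your $Y_0$ has $a_{m,m+3}=2m+4$ lines, while at degree $m+2$ one can accommodate at most $a_{m,m+2}\approx 3m/2+3$ lines, so $h^1(\mathcal{I}_{mP\cup Y_0}(m+2))\neq 0$ and the residual term kills the argument. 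This parity obstruction is exactly why the paper proves $H_{m,m+3}$ and $H_{m,m+4}$ as two independent base cases and then runs the induction of Lemma \ref{a13} in steps of two; $H_{m,m+4}$ cannot be bootstrapped from $H_{m,m+3}$ by the quadric trick. (Your twist bookkeeping also reflects this: with $\delta\approx m/2$ added lines the restricted system on $Q$ is $\mathcal{O}_Q(m+4,m+4-\delta)$, not $\mathcal{O}_Q(m+4,1)$.)

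The paper's actual input is at degree $m+2$: it starts from $a_{m,m+2}$ lines and adds $a_{m,m+4}-a_{m,m+2}=m+2$ (resp. $m+3$ in the small cases) lines of type $(0,1)$ on $Q$. This forces two ingredients your sketch does not anticipate. For $m$ even, $b_{m,m+2}=1$, so a general union of $a_{m,m+2}$ lines has $h^0(\mathcal{I}_{mP\cup Y}(m+2))=1$, and one must add a tangent vector $w$ supported at a point of $Y$ not lying in the singular locus of the unique degree-$(m+2)$ surface through $mP\cup Y$ to get the bi-vanishing needed for the residual step (with a separate argument in characteristic $p$). For $m$ odd, $b_{m,m+2}=(m+5)/2$ is large, and the paper needs the auxiliary assertion $R(m)$ (Lemma \ref{a12.1}), which places the required tangent vectors on a smooth conic $D$ in a plane through $P$ precisely so that the quadric $Q$ can later be chosen to contain $D$. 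Without these degree-$(m+2)$ statements ($B(m)$ and $R(m)$) your plan has no valid residual vanishing to stand on, so the proposal as written does not prove the lemma.
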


\begin{proof}
The proof depends on the parity of $m$.

\quad (a) First assume that $m$ is even. We have $a_{m,m+2} =3m/2+3$ and $b_{m,m+2}=1$.

\quad (a1) Assume for the moment $m\ge 6$ and hence $a_{m,m+4} = 5m/2+5$ and $b_{m,m+4} =10$ (Remark \ref{a1}). Let $Y\subset \mathbb {P}^3$ be a general union of $a_{m,m+2} =3m/2+3$ lines. Since $B(m)$ is
true (Lemma \ref{a8}) and $b_{m,m+2}=1$, we have $h^1(\mathcal {I}_{mP\cup Y}(m+2))=0$ and $h^0(\mathcal {I}_{mP \cup Y}(m+2)) =1$. 
The last equality implies $h^0(\mathcal {I}_{mP\cup Y}(m+1))=0$.

Let $T\subset \mathbb {P}^3$ be the only surface of degree $m+2$ containing $mP\cup Y$. Fix a system $x_0,x_1,x_2,x_3$ of homogeneous coordinates and 
let $f(x_0,x_1,x_2,x_3)$ be a degree $m+2$ homogeneous equation of $T$. In characteristic zero we have $\partial f/\partial x_i \ne 0$ for
at least one index $i$. Since $\partial f/\partial x_i \ne 0$ and $h^0(\mathcal {I}_{mP\cup Y}(m+1))=0$, we
have $\partial f/\partial x_i |mP\cup Y\ne 0$, i.e. $mP\cup Y\nsubseteq \Sing (T)$, i.e. $Y\nsubseteq \Sing (T)$. In characteristic $p>0$
we need to prove the existence of $Y\in L(P,3m/2+3)$ such that $h^0(\mathcal {I}_{mP\cup Y}(m+2))=1$ and at least one component of $Y$ is not contained in the singular
locus of the only degree $m$ hypersurface containing $mP\cup Y$. We get this using the proof that $B(m-1)$ implies $B(m)$ when $m$ is even (part (b)
of the proof of Lemma \ref{a8}). Take $(Y,L,R,S,O, H,v)$ satisfying $B(m-1)$ and use that $h^0(H,\mathcal {I}_{\{mP,H\}\cup 2L\cup 2R}(m+2)) =0$.

Since $Y\nsubseteq \Sing (T)$, there is $O\in Y$ and a tangent vector $w$ to $\mathbb {P}^3$ with $w\nsubseteq T$. Hence $h^0(\mathcal {I}_{mP\cup Y\cup w}(m+2))=0$
and so $h^1(\mathcal {I}_{mP\cup Y\cup w}(m+2))=0$.
Take as $Q$ a general quadric surface through $O$. We have $P\notin Q$ and $\Res _Q(mP\cup Y\cup w) =mP\cup Y\cup w$.
Moving the lines of $Y$ among the unions of $a_{m,m+2}$ disjoint lines of $\mathbb {P}^3$, one of them containing $O$, we may assume
that $(Y\cap Q)\setminus \{O\}$ is a general union of $3m+5$ points of $Q$. Let $F\subset Q$ be a union of $m+2$ distinct lines of type $(0,1)$ of
$Q$ with $\{O\} = Y\cap F$. Fix $S_1\subset (Y\cap Q)\setminus \{O\}$ with $\sharp (S_1) =5$. Let $E\subset Q$ be the union of the $5$ lines of type $(1,0)$
of $Q$ containing one point of $S_1$. Take $S_2\subset E\cap F$ such that each line of $E$ contains exactly one point of $S_2$ and each line of
$F$ contains at most one point of $S_2$. Set $S:= S_1\cup S_2$. Let $v\subset Q$ be a general union of tangent vectors of $Q$ with $v_{\red }=S$.
As in the proof of Lemma \ref{a11} it is sufficient to prove that $h^i(\mathcal {I}_{mP\cup Y\cup w\cup F\cup v}(m+4)) =0$,
$i=0,1$. Since $h^i(\mathcal {I}_{mP\cup Y\cup w}(m+2))=0$, $i=0,1$, it is sufficient to prove that $h^i(Q,\mathcal {I}_{(Y\cap Q)\cup F\cup v}(m+4))=0$,
i.e. $h^i(Q,\mathcal {I}_{(Y\cap Q)\cup v}(m+4,2)) =0$, $i=0,1$. We have $\deg ((Y\cap Q)\cup v) = 20 +(3m+6) -10$. Since $v$ is general, $\deg (v)=20$
and $S$ is general with the only restriction that $5$ lines of type $(1,0)$ of $Q$ contain each two points of $S$, we have
$h^1(\mathcal {I}_{v}(m+4,2)) =0$. Since $Y\cap Q\setminus S_1$ is general in $Q$, we get  $h^i(Q,\mathcal {I}_{(Y\cap Q)\cup v}(m+4,2)) =0$, $i=0,1$.

\quad (a2) Now assume $m \in \{2,4\}$. We have $a_{m,m+4} =5m/2+6$ and $b_{m,m+4} = 5-m$. Now $F$ is a union of $m+3$ lines, $\sharp (S_1) =2-m/2$,
$\deg (E) =3-m/2$
and $\sharp (S_2) = 3-m/2$.

\quad (b) Now assume that $m$ is odd.

\quad (b1) Assume $m\ge 17$. We have $a_{m,m+2} =3m/2+5/2$ and $b_{m,m+2} = (m+5)/2$. Since $m\ge 17$, we have $a_{m,m+4} =5m/2+9/2$ and $b_{m,m+4} = (m+25)/2$
(Remark \ref{a1}).
Take a solution $(Y,S,D,H,v)$ of $R(m)$ (Lemma \ref{a12.1}). Let $Q\subset \mathbb {P}^3$ be a general quadric surface containing $D$. Since $P\notin D$, then
$P\notin Q$. The quadric $Q$ is smooth and it intersects transversally $Q$. By the semicontinuity theorem for cohomology (\cite[III.12.8]{h}) for general $v$ we may also assume that no connected component of
$v$ is contained in $Q$, i.e. that $(mP\cup Y\cup v) = Y\cap Q$ (as schemes) and that $\Res _Q(mP\cup Y\cup v) = mP\cup Y\cup v$. We may move each component of $Y$ keeping fixed its point in $D$. Hence we may assume that $Y\cap (Q\setminus D)$ is a general subset of $Q$ with cardinality $2a_{m,m+2} -b_{m,m+2}$.
We have $a_{m,m+4}-a_{m,m+2} = m+2 \ge (m+5)/2 =b_{m,m+2}$. Let $F\subset Q$ be a disjoint union of $m+2$ lines of type $(0,1)$
with the only restriction that $F\cap Y = Y\cap D$ (it exists, because $m+2\ge b_{m,m+2}$).
Fix $S_1\subseteq (Y\cap Q)\setminus S$ such that $\sharp (S_1) =\lfloor b_{m,m+4}/2\rfloor$ (it exists because $2a_{m,m+2} = 3m+5\ge (m+5)/2 + m \ge b_{m,m+2} +\lfloor 
b_{m,m+4}/2\rfloor$). Let $E_1$ be the union of the lines of type $(1,0)$
of $Q$ containing one point of $S_1$. If $b_{m,m+4}$ is even, then set $E':= E$. If $b_{m,m+4}$ is odd, then let $E'$ be the union of $E_1$ and a general line of type $(1,0)$
of $Q$. Let $S_2\subset E'\cap E''$ be the union of one point for each component of $E'$, with the restriction
that $S_2\cap S_1=\emptyset$ and that each point of $S_2$ is contained in a different line of $E''$; we may find such a set $S_2$, because $E''\cap S_1=\emptyset$
and $\deg (E'') = a_{m,m+4}-a_{m,m+2}  =m+2\ge \lceil b_{m,m+4}/2\rceil$. Let $v'\subset Q$ be a general union of $b_{m,m+4}$ tangent vectors of $Q$ with $v'_{\red } =S'$.
Since $v'$ is general, no connected component of $v'$ is contained in $E''$ (hence $\Res _{E''}((Y\cap Q) \cup v') = Y\cap (Q\setminus E'')  \sqcup S' = ((Y\cap Q)\setminus S)\sqcup S'$.

\quad (b2) Assume $m \in \{3,5,7,9,11,13,15\}$. We have $a_{m,m+4} -a_{m,m+2} = m+3$ and $b_{m,m+4} = (15-m)/2$. We make the construction
of step (b1) with $\deg (F) = m+3$, $\sharp (S_1) = \lfloor (15-m)/4\rfloor$, and $\deg (E) =\sharp (S_2) =\lceil (15-m)/4\rceil$.
\end{proof}

\begin{lemma}\label{a12.0}
For all integers $k \ge m+3$ we have $a_{m,k}-a_{m,k-2} \ge a_{0,k}-a_{0,k-2} -1\ge \lceil k/2\rceil$.
\end{lemma}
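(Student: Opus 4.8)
The plan is to derive both inequalities from the defining relation (\ref{eqa1}) by purely elementary estimates: the first inequality I would read as a bound on how much the ``$m$-point correction'' $a_{0,k}-a_{m,k}$ can grow when $k$ is replaced by $k-2$, and the second as a direct lower bound for $a_{0,k}-a_{0,k-2}$.

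For the first inequality I would set $N:=\binom{m+1}{3}\ge 0$. Subtracting (\ref{eqa1}) for the pair $(m,k)$ from (\ref{eqa1}) for the pair $(0,k)$ gives $(k+1)(a_{0,k}-a_{m,k})=N-b_{0,k}+b_{m,k}$, and since $0\le b_{0,k},b_{m,k}\le k$ the right-hand side lies in $[N-k,N+k]$, so $a_{0,k}-a_{m,k}$ lies in the open interval $(\frac{N}{k+1}-1,\frac{N}{k+1}+1)$. The same computation with $k$ replaced by $k-2$ puts $a_{0,k-2}-a_{m,k-2}$ in $(\frac{N}{k-1}-1,\frac{N}{k-1}+1)$. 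Hence the integer
$$(a_{0,k}-a_{0,k-2})-(a_{m,k}-a_{m,k-2})=(a_{0,k}-a_{m,k})-(a_{0,k-2}-a_{m,k-2})$$
is strictly less than $\bigl(\frac{N}{k+1}+1\bigr)-\bigl(\frac{N}{k-1}-1\bigr)=2+N\bigl(\frac{1}{k+1}-\frac{1}{k-1}\bigr)\le 2$, so it is at most $1$; this is exactly $a_{m,k}-a_{m,k-2}\ge a_{0,k}-a_{0,k-2}-1$.

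For the second inequality I would use that, for $k\ge 3$, $\binom{k+3}{3}/(k+1)=(k+3)(k+2)/6$ and $\binom{k+1}{3}/(k-1)=(k+1)k/6$. From (\ref{eqa1}) with $m=0$ (so $\binom{1}{3}=0$) together with $0\le b_{0,k}\le k$ and $b_{0,k-2}\ge 0$ one gets $a_{0,k}\ge(\binom{k+3}{3}-k)/(k+1)$ and $a_{0,k-2}\le\binom{k+1}{3}/(k-1)$, whence
$$a_{0,k}-a_{0,k-2}\ \ge\ \frac{(k+3)(k+2)-(k+1)k}{6}-\frac{k}{k+1}\ =\ \frac{2k+3}{3}-\frac{k}{k+1}\ =\ \frac{2k^{2}+2k+3}{3(k+1)}\ >\ \frac{2k}{3}.$$
As $a_{0,k}-a_{0,k-2}$ is an integer exceeding $2k/3$, it is at least $\lfloor 2k/3\rfloor+1$, so $a_{0,k}-a_{0,k-2}-1\ge\lfloor 2k/3\rfloor$. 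It then remains to note that $\lfloor 2k/3\rfloor\ge\lceil k/2\rceil$ for every $k\ge 3$: for $k\ge 7$ this is immediate from $\lfloor 2k/3\rfloor\ge\frac{2k-2}{3}\ge\frac{k+1}{2}\ge\lceil k/2\rceil$, and the finitely many cases $k\in\{3,4,5,6\}$ are checked by hand. Since $k\ge m+3\ge 3$, this gives the second inequality.

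The argument is elementary and I do not expect a genuine obstacle; the only points that need a little care are the bookkeeping of strict versus non-strict inequalities in the floor/ceiling estimates (so that the integer shown to be $<2$ is really $\le1$, and the integer shown to exceed $2k/3$ is really $\ge\lfloor 2k/3\rfloor+1$) and the short finite verification of $\lfloor 2k/3\rfloor\ge\lceil k/2\rceil$ for $k\le 6$. It is cleanest to obtain the needed identities by subtracting instances of (\ref{eqa1}) directly, relying on $\binom{k+3}{3}-\binom{k+1}{3}=(k+1)^{2}$, rather than routing the argument through (\ref{eqa2}).
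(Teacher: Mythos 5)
Your proof is correct, and the two delicate points you flag (keeping the bounds strict before invoking integrality, and the finite check of $\lfloor 2k/3\rfloor\ge\lceil k/2\rceil$ for $k\in\{3,4,5,6\}$) are handled properly. The paper proves the lemma by the same kind of elementary arithmetic but routes it differently: it equates the instances of (\ref{eqa2}) for the pairs $(m,k)$ and $(0,k)$ (both sides being $(k+1)^2$) and then concludes from $a_{m,k-2}\le a_{0,k-2}$ together with the exact values $b_{0,x}=0$ for $x\equiv 0,1\pmod{3}$ and $b_{0,x}=(x+1)/3$ for $x\equiv 2\pmod{3}$, which give $a_{0,k}-a_{0,k-2}$ exactly by a short mod~$3$ case analysis rather than via your lower bound $\lfloor 2k/3\rfloor$. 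Your variant needs neither the explicit description of $b_{0,x}$ nor the comparison $a_{m,k-2}\le a_{0,k-2}$: it uses only $0\le b_{\cdot,\cdot}\le k$, integrality, and the nonnegativity of the fat-point term, so it is also insensitive to whether the constant in (\ref{eqa1}) is read as $\binom{m+1}{3}$ (as displayed) or as $\binom{m+2}{3}=\deg(mP)$ (as used elsewhere in the paper). What the paper's route buys is the sharp value of $a_{0,k}-a_{0,k-2}$ and a very short write-up, at the cost of leaving the final integer bookkeeping implicit; what yours buys is a self-contained and slightly more robust estimate, at the cost of the extra finite verification.
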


\begin{proof}
From (\ref{eqa2}) and the same equation for $m= 0$ we get
\begin{align*}
&2a_{m,k-2} +(k+1)(a_{m,k}-a_{m,k-2}) +b_{m,k}-b_{m-k-2} = \\
&2a_{0,k} +(k+1)(a_{0,k}-a_{0,k-2}) + b_{0,k}-b_{0,k-2}
\end{align*}
We have $b_{0,x} =0$ if $x\equiv 0,1\pmod{3}$ and $b_{0,x} =(x+1)/3$ if $x\equiv 2\pmod{3}$. The definitions
of the integers $a_{m,k-2}$ and $a_{0,k-2}$ give $a_{m,k-2} \le a_{0,k-2}$, proving the lemma.
\end{proof}

\begin{lemma}\label{a13}
$H_{m,k}$ is true for all $k\ge m+3$.
\end{lemma}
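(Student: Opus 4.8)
The plan is to prove $H_{m,k}$ by induction on $k$, with base cases $k=m+3$ and $k=m+4$ already established in Lemmas \ref{a11} and \ref{a12}. So fix $k\ge m+5$ and assume $H_{m,k-2}$ holds; the goal is to produce a quintuple $(Y,Q,S,v,E)$ satisfying the four conditions of $H_{m,k}$. The natural device is a Castelnuovo restriction to a smooth quadric $Q$: if $Z$ is the scheme we want to control in degree $k$, then $\Res_Q(Z)$ should be (a scheme of the type handled by) $H_{m,k-2}$ in degree $k-2$, while $Z\cap Q$ should be a zero-dimensional scheme on $Q$ whose $h^1$ in bidegree $(k,k)$ vanishes. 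The degree count is governed by $a_{m,k}-a_{m,k-2}$, which by Lemma \ref{a12.0} is at least $\lceil k/2\rceil$; this is exactly what one needs to fit the ``new'' lines (the ones in $Y$ not already accounted for by the $H_{m,k-2}$ solution) so that they meet $Q$ in enough points to be absorbed by lines of type $(1,0)$ or $(0,1)$ on $Q$, as in the proofs of Lemmas \ref{a11} and \ref{a12}.

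Concretely, I would start from a solution $(Y_0,Q_0,S_0,v_0,E_0)$ of $H_{m,k-2}$, so $h^i(\mathcal{I}_{mP\cup Y_0\cup v_0}(k-2))=0$. The idea is to build $Y\supset Y_0$ by adjoining $a_{m,k}-a_{m,k-2}$ further lines, chosen so that each new line meets a fixed smooth quadric $Q$ (one may take $Q$ general through a conic or line configuration, keeping $P\notin Q$) in two points lying on prescribed rulings. One arranges the new lines together with the old data so that $\Res_Q(mP\cup Y\cup v\cup(\text{auxiliary lines on }Q))$ is again the scheme $mP\cup Y_0\cup v_0$ (using that residuating $mP$ against $Q$ with $P\notin Q$ does nothing, and that the lines on $Q$ residuate away to their points), hence has vanishing cohomology in degree $k-2$ by the inductive hypothesis. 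The remaining task is then purely on $Q\cong\mathbb{P}^1\times\mathbb{P}^1$: one must check that a general zero-dimensional scheme consisting of the trace $Y\cap Q$ (a union of general points plus the $b_{m,k}$ tangent vectors $v$ distributed two-per-line on $\lceil b_{m,k}/2\rceil$ lines of one ruling), together with a union $F$ of lines of type $(0,1)$, imposes independent conditions in bidegree $(k,2)$ — this reduces, exactly as in Lemma \ref{a11}, to the vanishing of $h^1(Q,\mathcal{I}_S(k,\ast))$ for $S$ on a few lines of a ruling, plus genericity of the rest of the points.

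The bookkeeping that makes this work is matching the degree equation \eqref{eqa2}: the ``$(k+1)^2$'' appearing there is the jump in $\binom{k+3}{3}-\binom{k+1}{3}$, and it must equal the contribution of the lines residuated onto $Q$ (their bidegree-$(k,2)$ sections) plus the new points and the corrections from the $b$'s; Lemma \ref{a12.0} guarantees there are enough new lines for the construction to be combinatorially feasible, and the identity \eqref{eqa1} for $k$ and $k-2$ forces the numbers to balance. I would also need to verify the auxiliary conditions (3) of $H_{m,k}$ — that $S\subset E$ with $E$ a disjoint union of $\lceil b_{m,k}/2\rceil$ lines on $Q$, each containing at most two points of $S$ — which is automatic from how $S_2$ and the new tangent vectors are placed, and condition (2) — that no component of $v$ lies in $Y$ — which follows by taking $v$ general subject to being supported on $S\subset Q$, invoking semicontinuity (\cite[III.12.8]{h}).

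The main obstacle I expect is the quadric-side cohomology vanishing when $b_{m,k}$ is relatively large compared to $k$: one needs $\lceil b_{m,k}/2\rceil$ lines of one ruling each carrying a length-two subscheme of $v$ to impose independent conditions in bidegree $(k,2)$, and simultaneously the union $F$ of $(0,1)$-lines used for the residuation must be disjoint from the points $Y\cap D$ coming from the previous step. For $k$ near $m$ the $b_{m,k}$ can be comparable to $k$ (as Remark \ref{a1} shows for $k=m+4$), so one may have to split into parity cases and possibly handle a finite list of small $m$ separately, just as Lemma \ref{a12} does; but for $k\ge m+5$ the bound $a_{m,k}-a_{m,k-2}\ge\lceil k/2\rceil\ge b_{m,k}/2 + O(1)$ from Lemma \ref{a12.0} should give enough room to make the generic position arguments go through uniformly.
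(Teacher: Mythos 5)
Your overall strategy coincides with the paper's: induct on $k$ in steps of two from the base cases $k=m+3,m+4$ of Lemmas \ref{a11} and \ref{a12}, restrict to a smooth quadric $Q$ so that the residual is the $H_{m,k-2}$ scheme, finish with a cohomology computation on $Q\cong\mathbb{P}^1\times\mathbb{P}^1$, and invoke semicontinuity. But there are genuine gaps. First, as you literally describe it the residuation fails: if the $a_{m,k}-a_{m,k-2}$ new lines adjoined to $Y_0$ are honest lines of $\mathbb{P}^3$ meeting $Q$ in two points, they survive in $\Res_Q$, so the residual is $mP\cup Y_0\cup v_0\cup(\text{new lines})$, whose degree already exceeds $\binom{k+1}{3}$, and its $h^1$ in degree $k-2$ cannot vanish. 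The induction closes only if the new lines are specialized to lines of one ruling of $Q$ (the paper's $E\cup F$), so that they disappear under $\Res_Q$ and show up in the trace. You then need the mechanism that converts this degenerate configuration back into an element of $L(P,a_{m,k})$: it is precisely the tangent vectors $v_0$ of the $H_{m,k-2}$ solution, supported on $S_0\subset E_0$ with at most two per line (condition (3) of $H$), that make $Y_0\cup E\cup F\cup v_0$ a flat limit of disjoint unions of $a_{m,k}$ lines, and a final deformation/semicontinuity step (producing a $Y'$ transversal to $Q$ through the points of the new support $S'$) is required to hand the quintuple $(Y',Q,S',v',E')$ to $H_{m,k}$. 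Your proposal gestures at ``auxiliary lines on $Q$'' and uses semicontinuity only for condition (2), but never states this degeneration-and-smoothing step, which is the heart of the construction.

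Second, the quadric-side verification is misstated: you repeatedly place it in bidegree $(k,2)$, which is correct only at the base case $k=m+4$ (where $\deg F=m+2=k-2$). In the inductive step the trace sits in bidegree $(k,k)$ and, after residuating the $a_{m,k}-a_{m,k-2}$ ruling lines $E''=E\cup F$, in bidegree $(k,\,k-a_{m,k}+a_{m,k-2})$ (roughly $(k,k/3)$ for large $k$); equation (\ref{eqa2}) gives the exact equality $\sharp\bigl(((Y\cap Q)\setminus S)\cup S'\bigr)=(k+1)(k+1-a_{m,k}+a_{m,k-2})$, which is what makes $h^0=h^1=0$ simultaneously possible, and this exact count appears nowhere in your sketch. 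You also need a numerical fact you do not supply (the paper's Claim~1: $2a_{m,k-2}\ge k-2+k/2$ for $k\ge m+5$) to guarantee that $(Y\cap Q)\setminus S$ has enough points to choose the new set $S_1$ of cardinality $\lfloor b_{m,k}/2\rfloor$ disjoint from $S$; Lemma \ref{a12.0} only compares the number of new lines with $\lceil b_{m,k}/2\rceil$, which is a different count. Conversely, your closing worry about $b_{m,k}$ being large relative to $k$ and needing parity splits for $k\ge m+5$ is unfounded: $b_{m,k}\le k$ by definition, and once the two points above are in place the inductive step is uniform.
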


\begin{proof}
By Lemmas \ref{a11} and \ref{a12} we may assume $k\ge m+5$ and that $H_{m,k-2}$ is true.
Fix a solution $(Y,Q,S,v,E)$ of $H_{m,k-2}$. Deforming if necessary each line of $Y$ we may assume
that $(Q\cap Y)\setminus S$ is a general subset of $Q$. Taking instead of $v$ a union of general tangent vectors of $\mathbb {P}^3$ with the points of $S$ as their support
we may assume that no connected component of $v$ is contained in $Q$. Therefore $\Res _Q(Y\cup v) = Y\cup v$ and $(Y\cup v)\cap Q=Y\cap Q$ (as schemes).
Call $(0,1)$ the ruling of $Q$ containing $E$ (any ruling of $Q$ if $b_{m,k-2} =0$ and hence $E=\emptyset$).
Lemma \ref{a12.0} gives $a_{m,k}-a_{m,k-2} \ge \deg (E)$. Let $F\subset Q$ be a general union of $a_{m,k}-a_{m,k-2} -\lceil b_{m,k-2}/2\rceil$ lines of type $(0,1)$
of $Q$. Set $E'':= E\cup F$. 

\quad {\emph {Claim 1:} If $k\ge m+5$, then $2a_{m,k-2} \ge k-2 +k/2$.

\quad {\emph {Proof of Claim 1:}} We have $2(k-1)a_{m,k-2} +2b_{m,k-2} =2\binom{k+3}{3} -2\binom{m+2}{3}$ and $b_{m,k-2} \le k-2$. Set $\psi (k,m):= 2\binom{k+3}{3}
-2\binom{m+2}{3} -(k-1)(k-2+k/2) -2k+4$. It is sufficient to prove that $\psi (k,m)\ge 0$ for all $k\ge m+5$. We have
$\psi (m+5,m) = (m+8)(m+7)(m+6)/3 -(m+2)(m+1)m/3 -(m+4)(3m+11)/2 -2m+6\ge 0$ and $\psi (k+1,m)\ge \psi (k,m)$ for all $k\ge m+5$.

Fix $S_1\subseteq (Y\cap Q)\setminus S$ such that $\sharp (S_1) =\lfloor b_{m,k}\rfloor$ (it exists by Claim 1 and the inequalities
$b_{m,k-2} \le k-2$, $b_{m,k} \le k$). Let $E_1\subset Q$ be the union of the lines of type $(1,0)$
of $Q$ containing one point of $S_1$. If $b_{m,k}$ is even, then set $E':= E$. If $b_{m,k}$ is odd, then let $E'$ be the union of $E_1$ and a general line of type $(1,0)$
of $Q$. Let $S_2\subset E'\cap E''$ be the union of one point for each component of $E'$, with the restriction
that $S_2\cap S_1=\emptyset$ and that each point of $S_2$ is contained in a different line of $E''$; we may find such a set $S_2$, because $E''\cap S_1=\emptyset$
and $\deg (E'') = a_{m,k}-a_{m,k-2} \ge \lceil b_{m,k}/2\rceil$ (Lemma \ref{a12.0}). Set $S':= S_1\cup S_2$. Let $v'\subset Q$ be a general union of $b_{m,k}$ tangent vectors of $Q$ with $v'_{\red } =S'$.
Since $v'$ is general, no connected component of $v'$ is contained in $E''$ (hence $\Res _{E''}((Y\cap Q) \cup v') = Y\cap (Q\setminus E'')  \sqcup S' = ((Y\cap Q)\setminus S)\sqcup S'$). 

\quad {\emph {Claim 2:}}
We claim that
$h^i(\mathcal {I}_{mP\cup Y\cup v\cup E''\cup v'}(k)) =0$, $i=0,1$. 

\quad {\emph {Proof of Claim 2:}} Since $\Res_Q(mP\cup Y\cup v\cup E''\cup v') =mP\cup Y\cup v$, $Q\cap (mP\cup Y\cup v\cup E'\cup v')
= (Y\cap Q)\cup E'' \cup v'$
and $h^i(\mathcal {I}_{mP\cup Y\cup v}(k-2)) =0$, $i=0,1$, it is sufficient to prove that $h^i(Q,\mathcal {I}_{(Y\cup Q)\cup v'\cup E'}(k)) =0$, $i=0,1$,
i.e. $h^i(Q,\mathcal {I}_{((Y\cap Q)\setminus S)\cup S'}(k,k-a_{m,k}+a_{m,k-2})) =0$, $i=0,1$. By (\ref{eqa2}) we have
$\sharp ((Y\cap Q)\setminus S)\cup S') = (k+1)(k+1-a_{m,k}+a_{m,k-2})$. Hence it is sufficient to prove that
the set $((Y\cap Q)\setminus S)\cup S'$ gives independent conditions to the linear system $|\mathcal {O}_Q(k,k-a_{m,k}+a_{m,k-2})|$. Since
$(Y\cap Q)\setminus S$ is general in $Q$, it is sufficient to prove that $S'$ gives independent conditions to $|\mathcal {O}_Q(k,k-a_{m,k}+a_{m,k-2})|$. This is true
since $S_1$ is general and hence the only restriction on the subset $S'$ of $Q$ is that each line of $E_1$ contains two points of $S'$.

}We may deform $Y\cup v\cup E''$ to a family of members of $L(P,a_{m,k})$ containing the points of $S'$ and
whose general member, $Y'$, intersects transversally $Q$, because each line of $E''$ contains at most one point of $Q$.
The quintuple $(Y',Q,S',v',E')$ satisfies $H_{m,k}$.\end{proof}

\begin{proof}[Proof of Theorem \ref{i1}:] Fix positive integers $m, d$ with critical value $k$. Hence $a_{m,k-1} < d \le a_{m,k}$. See Remark \ref{a8.0} and Lemma \ref{a9} for the
cases $k=m,m+1,m+2$. Hence we may assume $k\ge m+3$ and that the theorem is true for the integers $d$ such that $(m,d)$ has critical value $<k$.
Since $L(P,d)$ is irreducible, it is sufficient to prove the existence of $A, B\in L(P,d)$ such
that $h^0(\mathcal {I}_{mP\cup B}(k-1)) =0$ and $h^1(\mathcal {I}_{mP\cup A}(k))=0$. Let $Q\subset \mathbb {P}^3$ be a smooth quadric surface such that $P\notin Q$.

\quad (a) In this step we prove the existence of $A$. Since any element of $L(P,d)$ is a union of some of the connected components of an element of $L(P,a_{m,k})$,
it is sufficient to do the case $d=a_{m,k}$. Fix a solution $(Y,Q,S,v,E)$ of $H_{m,k-2}$. Deforming if necessary each line of $Y$ we may assume
that $(Q\cap Y)\setminus S$ is a general subset of $Q$. Taking instead of $v$ a union of general tangent vectors of $\mathbb {P}^3$ with the points of $S$ as their support
we may assume that no connected component of $v$ is contained in $Q$. Therefore $\Res _Q(Y\cup v) = Y\cup v$ and $(Y\cup v)\cap Q=Y\cap Q$ (as schemes).
Call $(0,1)$ the ruling of $Q$ containing $E$ (any ruling of $Q$ if $b_{m,k-2} =0$ and hence $E=\emptyset$).
Lemma \ref{a12.0} gives $a_{m,k}-a_{m,k-2} \ge \deg (E)$. Let $F\subset Q$ be a general union of $a_{m,k}-a_{m,k-2} -\lceil b_{m,k-2}/2\rceil$ lines of type $(0,1)$
of $Q$. The scheme $Y\cup E\cup F\cup v$ is a flat limit of a family of disjoint unions of $a_{m,k}$ lines, none of them containing $P$.
By the semicontinuity theorem for cohomology to prove the existence of $A$ it is sufficient to prove that $h^1(\mathcal {I}_{mP\cup Y \cup E\cup F\cup v}(k)) =0$.
We proved a more difficult vanishing in the proof of Lemma \ref{a13} (copy it without $v'$).

\quad (b) In this step we prove the existence of $B$. By Lemma \ref{a10} we may assume $k-1\ge m+3$. Since $b>a_{m,k-1}$, to prove the existence of $B$ it is sufficient to prove it when $d = a_{m,k-1}+1$. 

\quad (b1) Assume for the moment $k-1\ge m+4$, i.e. $k-3\ge m+2$.
Fix a solution $(Y,Q,S,v,E)$ of $H_{m,k-3}$. Deforming if necessary each line of $Y$ we may assume
that $(Q\cap Y)\setminus S$ is a general subset of $Q$. Taking instead of $v$ a union of general tangent vectors of $\mathbb {P}^3$ with the points of $S$ as their support
we may assume that no connected component of $v$ is contained in $Q$. Therefore $\Res _Q(Y\cup v) = Y\cup v$ and $(Y\cup v)\cap Q=Y\cap Q$ (as schemes).
Call $(0,1)$ the ruling of $Q$ containing $E$ (any ruling of $Q$ if $b_{m,k-3} =0$ and hence $E=\emptyset$).
Lemma \ref{a12.0} gives $a_{m,k-1}-a_{m,k-3} \ge \deg (E)$. Let $F\subset Q$ be a general union of $a_{m,k}-a_{m,k-3} -\lceil b_{m,k-2}/2\rceil +1$ lines of type $(0,1)$
of $Q$. The scheme $Y\cup E\cup F\cup v$ is a flat limit of a family of disjoint unions of $a_{m,k-1}+1$ lines, none of them containing $P$.
By the semicontinuity theorem for cohomology to prove the existence of $B$ it is sufficient to prove that $h^0(\mathcal {I}_{mP\cup Y \cup E\cup F\cup v}(k-1)) =0$.
Since $\Res _Q(mP\cup Y\cup F\cup E\cup F\cup v) = mP\cup Y\cup E\cup v$ and $Q\cap (mP\cup Y\cup F\cup E\cup F\cup v) = (Y\cap Q) \cup E\cup F$,
it is sufficient to prove that $h^0(Q,\mathcal {I}_{F\cup E \cup (Y\cap Q)}(k-1)) = 0$, i.e. that $h^0(Q,\mathcal {I}_{(Y\cap Q)\setminus S}(k-1,k-a_{m,k-1}+a_{m,k-3}-2)) =0$.
Since $(Y\cap Q)\setminus S$ is general in $Q$, it is sufficient to prove that $\sharp (Y\cap Q) -\sharp (S) \ge k(k-a_{m,k-1}+a_{m,k-3}-1)$. By
(\ref{eqa2}) for the integer $k':= k-1$ we have $\sharp (Y\cap Q) -\sharp (S) = k(k-a_{m,k-1}+a_{m,k-3}-1) +k-b_{m,k-1} > k(k-a_{m,k-1}+a_{m,k-3}-1)$.

\quad (b2) Now assume $k=m+4$. We modify the proof of $H_{m,m+3}$ (Lemma \ref{a11}). We have $a_{m,m+1}=m+2$, $b_{m,m+1}=0$, $a_{m,m+3} =2m+4$ and $b_{m,m+3} =4$ (Remark \ref{a1}). Let $Y\subset \mathbb {P}^3$ be a general union of $m+2$ lines. By \cite[Part (i)(c) of Theorem 4.2]{ccg1} we have $h^i(\mathcal {I}_Y(m+1)) =0$, $i=0,1$.
For a general $Y$ we may assume that $Y\cap Q$ is formed by $2m+4$ general points of $Q$. 
Let $F\subset Q$ be a general union of $m+3$ lines of type $(0,1)$. Use $Y\cup F$. Since $Y\cap Q$ is a general subset of $Q$ with cardinality
$2m+4$, we have $h^0(Q,\mathcal {I}_{Q\cap Y}(m+3,0)) =0$. We also have $Y\cap F=\emptyset$ and hence $h^0(Q,\mathcal {I}_{(Y\cap Q)\cup F}(m+3))=0$. 
\end{proof}

\providecommand{\bysame}{\leavevmode\hbox to3em{\hrulefill}\thinspace}

\end{document}